
\documentclass[12pt]{amsart}
\usepackage{amssymb}
\usepackage{amsmath}
\usepackage{amsfonts}

\textwidth=15.9cm
\textheight=20cm
\topmargin=0.3cm
\headsep=1cm
\oddsidemargin=0.3cm
\evensidemargin=0.0cm

\theoremstyle{plain}
\newtheorem{theorem}{Theorem}[section]
\newtheorem{proposition}[theorem]{Proposition}
\newtheorem{lemma}[theorem]{Lemma}
\newtheorem{corollary}[theorem]{Corollary}

\theoremstyle{remark}
\newtheorem{remark}[theorem]{Remark}

\newcommand{\reft}[1]{Theorem \ref{thm:#1}}
\newcommand{\refp}[1]{Proposition \ref{prop:#1}}
\newcommand{\refl}[1]{Lemma \ref{lem:#1}}
\newcommand{\refps}[2]{Propositions \ref{prop:#1} and \ref{prop:#2}}
\newcommand{\refc}[1]{Corollary \ref{cor:#1}}
\newcommand{\refs}[1]{Section \ref{sec:#1}}

\newcommand{\alp}{\alpha}
\newcommand{\bet}{\beta}
\newcommand{\gam}{\gamma}
\newcommand{\Gam}{\Gamma}
\newcommand{\del}{\delta}
\newcommand{\Del}{\Delta}
\newcommand{\eps}{\varepsilon}
\newcommand{\lam}{\lambda}
\newcommand{\sig}{\sigma}
\newcommand{\tet}{\vartheta}
\newcommand{\vphi}{\varphi}
\newcommand{\zet}{\zeta}

\newcommand{\C}{\mathbb{C}}
\newcommand{\Cx}{\mathbb{C}^{\x}}

\newcommand{\cd}{\mathcal{D}}
\newcommand{\ce}{\mathcal{E}}
\newcommand{\ck}{\mathcal{K}}

\newcommand{\cv}{\mathcal{V}}
\newcommand{\cx}{\mathcal{X}}

\newcommand{\hf}{\hat{f}}

\newcommand{\fq}{\mathbb{F}_{q}}
\newcommand{\fqx}{\fq^{\;\times}}

\newcommand{\ovl}{\overline}
\newcommand{\sset}{\subseteq}
\newcommand{\map}[3]{#1 \colon #2 \to #3}
\newcommand{\frob}[2]{\langle #1 , #2 \rangle}
\newcommand{\set}[2]{\{ #1 \colon #2 \}}

\newcommand{\seq}[2]{ #1_{1}, \ldots, #1_{#2}}
\newcommand{\x}{\times}
\newcommand{\inv}{^{-1}}
\newcommand{\ort}{^{\perp}}

\newcommand{\bcap}{\bigcap}
\newcommand{\bcup}{\bigcup}
\newcommand{\bca}{\begin{cases}}
\newcommand{\eca}{\end{cases}}
\newcommand{\lpar}{\left(}
\newcommand{\rpar}{\right)}

\newcommand{\frg}{{\mathfrak{g}}}
\newcommand{\frh}{{\mathfrak{h}}}
\newcommand{\fru}{{\mathfrak{u}}}
\newcommand{\frv}{{\mathfrak{v}}}

\renewcommand{\sp}{Sp_{2n}(q)}
\newcommand{\eo}{O_{2n}(q)}
\newcommand{\oo}{O_{2n+1}(q)}
\newcommand{\frsp}{\mathfrak{sp}_{2n}(q)}
\newcommand{\freo}{\mathfrak{o}_{2n}(q)}
\newcommand{\froo}{\mathfrak{o}_{2n+1}(q)}

\newcommand{\xa}{\xi_{\alpha,r}}
\newcommand{\la}{\lambda_{\alpha,r}}
\newcommand{\oa}{O^{\ast}_{\alpha,r}}
\newcommand{\xb}{\xi_{\beta,s}}
\newcommand{\lb}{\lam_{\bet,s}}

\newcommand{\xab}{\xa \xb}
\newcommand{\lab}{\la \lb}
\newcommand{\ld}{\lambda_{D,\phi}}
\newcommand{\xd}{\xi_{D,\phi}}
\newcommand{\od}{O^{\ast}_{D,\phi}}
\newcommand{\odd}{O^{\ast}_{D',\phi'}}
\newcommand{\tod}{\Omega^{\ast}_{D,\phi}}
\newcommand{\todd}{\Omega^{\ast}_{D',\phi'}}
\newcommand{\pd}{\map{\phi}{D}{\fqx}}
\newcommand{\cpd}{\map{\vphi}{\cd}{\fqx}}

\newcommand{\xcd}{\zet_{\cd,\vphi}}
\newcommand{\cod}{O^{\ast}_{\cd,\vphi}}
\newcommand{\kd}{K_{\cd,\vphi}}

\newcommand{\xdd}{\xi_{D',\phi'}}
\newcommand{\pdd}{\map{\phi'}{D'}{\fqx}}
\newcommand{\xddd}{\xi_{D'',\phi''}}
\newcommand{\pddd}{\map{\phi''}{D''}{\fqx}}

\newcommand{\zij}{\zet_{i,j,r}}
\newcommand{\mij}{\mu_{i,j,r}}

\newcommand{\irr}{\operatorname{Irr}}
\newcommand{\cf}{\operatorname{cf}}
\newcommand{\scf}{\operatorname{scf}}
\newcommand{\tr}{\operatorname{Tr}}

\newcommand{\all}{\text{ for all }}

\allowdisplaybreaks

\begin{document}


\title[]{Supercharacters of the Sylow $p$-subgroups of the finite symplectic and orthogonal groups }

\author[]{Carlos A. M. Andr\'e \& Ana Margarida Neto}

\address[C. A. M. Andr\'e]{Departamento de Matem\'atica \\ Faculdade de Ci\^encias da Uni\-ver\-si\-da\-de de Lis\-boa \\ Cam\-po Grande \\ Edi\-f\'\i \-cio C6 \\ Piso 2 \\ 1749-016 Lisboa \\ Portugal}

\address[A. M. Neto]{Instituto Superior de Economia e Gest\~ao \\ Universidade T\'ecnica de Lisboa \\ Rua do Quelhas 6 \\ 1200-781 Lisboa \\ Portugal}

\address[C. A. M. Andr\'e \& A. M. Neto]{Centro de Estruturas Lineares e Combinat\'orias \\ Complexo Interdisciplicar da Universidade de Lisboa \\ Av. Prof. Gama Pinto 2 \\ 1649-003 Lisboa \\ Portugal}

\email{caandre@fc.ul.pt}

\email{ananeto@iseg.utl.pt}

\subjclass[2000]{Primary 20C15; Secondary 20G40}

\date{April 26, 2008}

\thanks{This research was made within the activities of the Centro de Estruturas Lineares e Combinat\'orias (University of Lisbon, Portugal) and was partially supported by the Funda\c c\~ao para a Ci\^encia e Tecnologia (Lisbon, Portugal) through the project POCTI-ISFL-1-1431. A large part of the research of the first author was made and concluded while he was visiting the University of Stanford (USA) and participating in the special program on ``Combinatorial Representation Theory'' at the MSRI (Berkeley, USA) whose hospitality is gratefully acknowledged, and was partially supported by the sabbatical research grant 4/2008 of the Funda\c c\~ao Luso-Americana para o Desenvolvimento (Lisbon, Portugal). The first author also expresses his sincere gratitude to Persi Diaconis for his invitation to visit the University of Stanford, and for many enlightening discussions regarding supercharacters and their applications.}

\keywords{Finite unipotent group, Sympletic group, Orthogonal group, Supercharacter, Positive root, Basic set of positive roots}

\begin{abstract}
We define and study supercharacters of the classical finite unipotent groups of types $B_{n}(q)$, $C_{n}(q)$ and $D_{n}(q)$. We show that the results proved in \cite{AN1} remain valid over any finite field of odd characteristic. In particular, we show how supercharacters for groups of those types can be obtained by restricting the supercharacter theory of the finite unitriangular group, and prove that supercharacters are orthogonal and provide a partition of the set of all irreducible characters. In addition, we prove that the unitary vector space spanned by all the supercharacters is closed under multiplication, and establish a formula for the supercharacter values. As a consequence, we obtain the decomposition of the regular character as an orthogonal linear combination of supercharacters. Finally, we give a combinatorial description of all the irreducible characters of maximum degree in terms of the root system, by showing how they can be obtained as constituents of particular supercharacters.
\end{abstract}

\maketitle


\section{Introduction} \label{sec:intro}

The concept of a ``supercharacter theory'' for an arbitrary finite group was developed by P. Diaconis and I. M. Isaacs in the paper \cite{DI}. Roughly, a supercharacter theory replaces irreducible characters by ``supercharacters'', and conjugacy classes by ``superclasses'', in such a way that a ``supercharacter table'' can be constructed as an ``almost unitary'' matrix with similar properties as the usual character table (namely, orthogonality of rows and columns). More precisely, given any finite group $G$, a {\it supercharacter theory} for $G$ consists of  a partition $\ck$ of $G$ and a set $\cx$ of (complex) characters of $G$ satisfying the following three axioms:
\begin{enumerate}
\item $|\ck| = |\cx|$;
\item every irreducible character of $G$ is a constituent of a unique $\xi \in \cx$;
\item the characters in $\cx$ are constant on the member of $\ck$.
\end{enumerate}
The elements of $\ck$ will be referred to as {\it superclasses}, and the elements of $\cx$ as {\it supercharacters} of $G$. (We observe that, by \cite[Lemma~2.1]{DI}, axiom (ii) is equivalent to requiring that $\{1\} \in \ck$.)

Every finite group $G$ has two ``trivial'' supercharacter theories: the full character theory (where $\cx$ consists of all irreducible characters of $G$, and $\ck$ of all the conjugacy classes of $G$), and the one where $\cx = \{1_{G}, \rho_{G}-1_{G}\}$ and $\ck$ consists of the sets $\{1\}$ and $G - \{1\}$; as usual, we denote by $1_{G}$ the trivial character and by $\rho_{G}$ the regular character of $G$. Although for some groups these are the only possibilities, there are many groups for which nontrivial supercharacter theories exist, and in many cases it may be possible to obtain useful information using some particular supercharacter theory. An illustrating example can be found in the paper \cite{ADS} where E. Arias-Castro, P. Diaconis and R. Stanley showed that a special supercharacter theory can be applied to study a random walk on upper triangular matrices over finite fields using techniques that traditionally required the knowledge of the full character theory.

Supercharacters theories were initially developed for the upper unitrangular group $U_{n}(q)$ consisting of all unipotent upper-triangular $n \times n$ matrices over the finite field $\fq$ with $q$ elements (where $q$ is a power of some prime number $p$). It is known that an explicit description of the irreducible characters and conjugacy classes of $U_{n}(q)$ is an ``intractable'' problem; in fact, in the paper \cite{Gal}, Gudivok et al. show that a ``nice'' description of the conjugacy classes of $U_{n}(q)$ leads to a ``nice'' description of wild quivers. However, in his PhD thesis \cite{A0}, the first author begun the study of the ``basic characters'' of $U_{n}(q)$ (under the assumption that $p \geq n$), and was able to show that ``clumping'' together some of the conjugacy classes and some of the irreducible characters one attains a workable ``approximation'' to the representation theory of $U_{n}(q)$. His results were published in a series of papers in the Journal of Algebra, and showed in particular that the ``basic characters'' determine uniquely the superclasses of a supercharacter theory for $U_{n}(q)$ (the ``theory of basic characters'' was renamed after Roger W. Carter as ``a superclass and supercharacter theory''). We mention that the original theory relies on a construction due to D. Kazdhan (see \cite{K}) and is based on Kirillov's method of coadjoint orbits (see \cite{Ki} for a description of Kirillov's method for the unitriangular group; see also \cite{S1} where J. Sangroniz gives a general version of Kazdhan's construction for algebra groups defined over finite fields of sufficiently large characteristic). Later, in his PhD thesis \cite{Y}, N. Yan showed how the ``basic characters'' can be obtained using more elementary methods which avoid Kazhan's construction and the algebraic geometry involved in it. Yan's approach is valid for an arbitrary prime, and was generalized later by P. Diaconis and M. Isaacs in the paper \cite{DI} in order to extend the theory for an arbitrary finite algebra group defined over $\fq$ (see also \cite{ANi} where a generalization was obtained for algebra groups defined over finite radical rings and over certain rings of $p$-adic integers).

The main goal of this paper is to extend to an arbitrary odd prime the results obtained in the paper \cite{AN1} where the authors started to develop a supercharacter theory for a Sylow $p$-subgroup $U$ of one the (non-twisted) Chevalley groups $C_{n}(q)$, $B_{n}(q)$, and $D_{n}(q)$. (We mention that the present paper is a companion of the forthcoming paper \cite{AN2} where a supercharacter theory for $U$ is established by defining the superclasses of $U$.) As in \cite{AN1}, the notion of a supercharacter of $U$ is very similar the notion of a ``basic character'' of the unitriangular group $U_{n}(q)$, and follows the original idea of parametrizing supercharacters by certain ``minimal'' subsets of (positive) roots. In fact, it is known that the supercharacters of $U_{n}(q)$ can be obtained as certain ``reduced'' products of ``elementary characters'' which are irreducible characters corresponding to the ``matrix entries'' $(i,j)$, for $1 \leq i < j \leq n$, labelled by nonzero elements of $\fq$; in Yan's thesis, the ``elementary characters'' were called ``primary characters'', and the supercharacters were called ``transition characters''. (We refer that the factorization of a supercharacter as a product of ``elementary characters'' holds, not only for the unitriangular group, but also for any finite algebra group, as it is explained in the paper \cite{AO} where O. Pinho and the first author obtain a relation between factorizations of supercharacters and decomposability of certain cyclic modules.)

Following Yan's method, one can show that the supercharacters of $U_{n}(q)$ are parame\-tri\-zed by certain combinatorial data consisting of a ``basic set'' $D$ of matrix entries such that no two elements of $D$ agree in, either the first, or the second, coordinate, and of a map $\phi$ from $D$ to the nonzero elements of $\fq$. (There is an alternative way of parametrizing the supercharacters of $U_{n}(q)$ by labelled set partitions of $\{1, 2, \ldots, n\}$, and we mention that a rich combinatorial structure is arising and appears to have a remarkable analogy with the well-known connection between partitions of $n$ and the representation theory of the symmetric group $S_{n}$; see the papers \cite{T1,T2} by N. Thiem and his collaborators.)

In the present paper, as in \cite{AN1}, we define the supercharacters also as certain ``reduced'' products of ``elementary characters'' (which in general are not necessarily irreducible characters) of the given Sylow $p$-subgroup $U$. These ``reduced'' products are parametrized by pairs consisting of a conveniently chosen ``basic subset of roots'' and of a map to the nonzero elements of $\fq$. (We note that the roots in the unitriangular case are in one-to-one correspondence with the matrix entries.) In fact, the group $U$ can be naturally identified with a subgroup of a unitriangular group, and we will show that the elementary characters (and supercharacters) of $U$ can be obtained as constituents of the restriction of a supercharacter of that unitriangular group.

The paper is organized as follows. In \refs{supchar}, we introduce the necessary notation and define the elementary characters and the supercharacters of the group $U$. Then, in \refs{elem}, we obtain the elementary characters of $U$ by restricting elementary characters of the unitriangular group which contains $U$, and use this information to show that the complex vector space spanned by the supercharacters is, in fact, the associative algebra (finitely generated) by the elementary characters. As a consequence, we deduce that every irreducible character of $U$ is a constituent of a supercharacter. Then, in \refs{ortho}, we prove the orthogonality of supercharacters (as class functions of $U$), by using a partition of the dual space of the Lie algebra of $U$ in terms of its ``basic subvarieties'' as obtained in Theorem~4.5 of the authors' paper \cite{AN1}. In \refs{formula}, we deduce a formula for the supercharacters analogous to the one proved in \cite[Theorem~5.6]{DI}, and obtain a decomposition of the regular character of $U$ as a linear combination (with nonnegative integer coefficients) of supercharacters. Finally, in \refs{max}, we apply our results on supercharacters to identify the irreducible characters of maximum degree of $U$. We observe that, for several times, we refer to results proved for the unitriangular group under the assumption that the prime $p$ is sufficiently large; however, those results are known to be true for arbitrary primes, as it follows from Yan's work (see also \cite{A4}, or \cite{DI}).


\section{Supercharacters} \label{sec:supchar}

Let $p \geq 3$ be a prime number, $q = p^{e}$ ($e \geq 1$) a power of $p$, and $\fq$ the finite field with $q$ elements. For a fixed positive integer $n$, let $G$ denote one of the following classical finite groups: the symplectic group $\sp$, the even orthogonal group $\eo$, or the odd orthogonal group $\oo$ (in alternative notation, these are the (non-twisted) Chevalley groups $C_{n}(q)$, $B_{n}(q)$, and $D_{n}(q)$, respectively). Throughout the paper, we set $U = G \cap U_{m}(q)$ where $$m = \bca 2n, & \text{if $G = \sp$, or $G = \eo$,} \\ 2n+1, & \text{if $G = \oo$,,} \eca$$ and $U_{m}(q)$ denotes the upper unitriangular group consisting of all unipotent upper-triangular $m \times m$ matrices over $\fq$. Then, $U$ is a Sylow $p$-subgroup of $G$, and it is described as follows. Let $J = J_{n}$ be the $n \times n$ matrix with 1's along the anti-diagonal and 0's elsewhere. Then, $U$ consists of all (block) matrices of the form
\begin{equation} \label{e1}
\begin{pmatrix} x & xu & xz \\ 0 & I_{r} & -u^{T}J \\ 0 & 0 & Jx^{-T}J \end{pmatrix}
\end{equation}
where $x \in U_{n}(q)$, $u$ is an $n \x r$ matrix over $\fq$, and
\begin{enumerate}
\item $r = 0$, and $Jz^{T} - zJ = 0$, if $U \leq \sp$;
\item $r = 0$, and $Jz^{T} + zJ = 0$, if $U \leq \eo$;
\item $r = 1$, and $Jz^{T} + zJ = -uu^{T}$, if $U \leq \oo$.
\end{enumerate}

As mentioned in the Introduction, the supercharacters of $U$ will be parametrized by certain subsets of (positive) roots. Thus, we introduce some notation and recall some elementary facts concerning roots; for the details, we refer to the books \cite{C1,C2} by R. Carter (see also \cite[Chapter~8]{CR}). Let $T$ be the maximal torus of $G$ consisting of all diagonal matrices, and $\Sigma$ the root system defined by $T$. The elements of $\Sigma$ are described as follows. For each $1 \leq i \leq n$, let $\map{\eps_{i}}{T}{\fqx}$ be the map defined by $\eps_{i}(t) = t_{i}$ for all $t \in T$; here, we denote by $t_{i} \in \fqx$ the $(i,i)$th entry of the matrix $t \in T$. Then, $\Sigma = \Phi \cup (-\Phi)$ where $$\Phi = \set{\eps_{i} \pm \eps_{j}}{1 \leq i < j \leq n} \cup \Phi'$$ and $$\Phi' = \bca \set{2\eps_{i}}{1 \leq i \leq n}, & \text{if $G = \sp$,} \\ \emptyset, & \text{if $G = \eo$,} \\ \set{\eps_{i}}{1 \leq i \leq n}, & \text{if $G = \oo$.} \eca$$ The roots in $\Phi$ are said to be {\it positive}, and the roots in $-\Phi$ are said to be {\it negative}. Throughout the paper, the word ``root'' will always stand for ``positive root''.

With $\Phi$ we associate the subset of ``matrix entries'' $\ce \sset \set{(i,j)}{-n \leq i, j \leq n}$ as follows. For any $\alp \in \Phi$, we set $$\ce(\alp) = \bca \{(i,j), (-j,-i)\}, & \text{if $\alp = \eps_{i} - \eps_{j}$ for $1 \leq i < j \leq n$}, \\ \{(i,-j), (j,-i)\}, & \text{if $\alp = \eps_{i} + \eps_{j}$ for $1 \leq i < j \leq n$}, \\ \{(i,-i)\}, & \text{if $G = \sp$ and $\alp = 2\eps_{i}$ for $1 \leq i \leq n$,} \\ \{(i,0), (0,-i)\}, & \text{if $G = \oo$ and $\alp = \eps_{i}$ for $1 \leq i \leq n$,} \eca$$ and we define $$\ce = \bcup_{\alp \in \Phi} \ce(\alp).$$ More generally, for each subset $\Psi \sset \Phi$, we set $$\ce(\Psi) = \bcup_{\alp \in \Psi} \ce(\alp);$$ hence, $\ce = \ce(\Phi)$. 

On the other hand, we consider the mirror order $\prec$ on the set $\{0, \pm 1, \ldots, \pm (n+1)\}$ which is defined as $$1 \prec 2 \prec \cdots \prec n+1 \prec 0 \prec -(n+1) \prec \cdots \prec -2 \prec -1,$$ and we shall index the rows (from left to right) and columns (from top to bottom) of any $m \x m$ matrix according to this ordering. Hence, the entries of any matrix $x \in U_{m}(q)$ are indexed by all the pairs $(i,j) \in \ce$: for each $(i,j) \in \ce$, we shall write $x_{i,j}$ to denote the $(i,j)$th entry of $x$ (which occurs in the $i$th row and in the $j$th column). For our purposes, it is convenient to consider the set $$\ce^{+} = \set{(i,j) \in \ce}{1 \leq i \leq n,\ i \prec j \preceq -i},$$ and extend this notation to any subset $\Psi \sset \Phi$ by setting $$\ce^{+}(\Psi) = \ce(\Psi) \cap \ce^{+}.$$ We observe that there exists a one-to-one correspondence between $\Phi$ and $\ce^{+}$.

For any $\alp \in \Phi$, we define the subgroup $U_{\alp}$ of $U$ as follows:
\begin{enumerate}
\item if $\alp = \eps_{i}-\eps_{j}$ for $1 \leq i < j \leq n$, then $$U_{\alp} = \set{x \in U}{x_{i,k} = 0,\  i < k < j};$$
\item if $\alp = \eps_{i}-\eps_{j}$ for $1 \leq i < j \leq n$, then $$U_{\alp} = \set{x \in U}{x_{i,k} = x_{j,l} = 0,\ i < k \leq n,\ j \prec l \preceq 0};$$
\item if, either $\alp = 2\eps_{i}$ for $1 \leq i \leq n$ (in the case where $U \leq \sp$), or $\alp = \eps_{i}$ for $1 \leq i \leq n$ (in the case where $U \leq \oo$), then $$U_{\alp} = \set{x \in U}{x_{i,k} = 0,\ i < k \leq n}.$$
\end{enumerate}

Let $\map{\tet}{\fq}{ \Cx}$ be a non-trivial linear character of the additive group $\fq^{\;+}$ of $\fq$ (this character will be kept fixed throughout the paper; moreover, all characters will be taken over the complex field). For any $r \in \fqx$, the mapping $x \mapsto \tet(rx_{i,j})$ defines a linear character $\map{\la}{U_{\alpha}}{ \Cx}$ of $U_{\alpha}$, and we define the {\it elementary character} $\xa$ to be the induced character $$\xa = (\la)^{U}$$ (see \cite{A1} for the corresponding definition in the case of the unitriangular group; see also \cite[Corollary~5.11]{DI} and the discussion thereon).

We next define the notion of a ``basic subset of roots''. To start with, we recall that a subset $\cd \sset \ce$ is said to be {\it basic} if it contains at  most one entry from each row and at most one root from each column; in other words, $\cd \sset \ce$ is basic if $$|\set{j}{i \prec j \preceq -1,\ (i,j) \in \cd}| \leq 1 \quad \text{and} \quad |\set{i}{1 \preceq i \prec j,\ (i,j) \in \cd}| \leq 1$$ for all $-n \leq i, j \leq n$. Then, we say that $D \sset \Phi$ is a {\it basic subset} if $\cd = \ce(D)$ is a basic subset of $\ce$. (We will always use script letters to denote basic subsets of $\ce$, in contrast to basic subsets of $\Phi$ which will be mostly denoted by italic letters.)

Given any non-empty basic subset $D \sset \Phi$ and any map $\pd$, we define the supercharacter $\xd$ to be the product $$\xd = \prod_{\alp \in D} \xi_{\alp,\phi(\alp)}.$$ For convenience, if $D$ is the empty subset of $\Phi$, we consider the empty map $\pd$, and define $\xd$ to be the unit character $1_{U}$ of $U$. Let $$U_{D} = \bcap_{\alp \in D} U_{\alp} \quad \text{and} \quad \ld = \prod_{\alp \in D} (\lam_{\alp,\phi(\alp)})_{U_{D}}.$$ Then, $\ld$ is clearly a linear character of $U_{D}$ and, by \cite[Proposition~2.2]{AN1}, the supercharacter $\xd$ can be obtained as the induced character
\begin{equation} \label{e2}
\xd = (\ld)^{U}.
\end{equation}

We now state the main result of this paper which extends \cite[Theorem~1.1]{AN1} for arbitrary odd primes. (Given any finite group $G$, we denote by $\irr(G)$ the set of all irreducible characters of $G$, and by $\frob{\cdot}{\cdot}$ (or by $\frob{\cdot}{\cdot}_{G}$ if necessary) the Frobenius' scalar product on the complex vector space of all class functions defined on $G$.)

\begin{theorem} \label{thm:t1}
Let $\chi$ be an arbitrary irreducible character of $U$. Then, $\chi$ is a constituent of a unique supercharacter of $U$; in other words, there exists a unique basic subset $D \sset \Phi$ and a unique map $\pd$ such that $\frob{\chi}{\xd} \neq 0$.
\end{theorem}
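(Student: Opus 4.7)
The plan is to split \reft{t1} into an \emph{existence} statement (every $\chi\in\irr(U)$ is a constituent of at least one $\xd$) and a \emph{uniqueness} statement (at most one $\xd$), and to deduce each from a structural result developed in a later section.

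For existence, I would argue through the ambient unitriangular group $U_m(q) \supseteq U$. The content of \refs{elem}, as announced, is that the elementary characters $\xa$ of $U$ arise as constituents of restrictions of elementary characters of $U_m(q)$, and that the $\mathbb{C}$-vector space spanned by the supercharacters $\{\xd\}$ coincides with the associative $\mathbb{C}$-algebra generated by the $\xa$. For $U_m(q)$, Yan's theory (valid over every finite field) asserts that every $\psi\in\irr(U_m(q))$ is a constituent of some supercharacter of $U_m(q)$. Restricting to $U$: every $\chi\in\irr(U)$ occurs in $\psi|_U$ for some $\psi\in\irr(U_m(q))$; then $\chi$ occurs in the restriction to $U$ of a product of elementary characters of $U_m(q)$, which by \refs{elem} lies in the algebra generated by the $\xa$, hence in the span of the $\xd$. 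Writing $\chi$ against a basis of that span yields $\frob{\chi}{\xd}\neq 0$ for some pair $(D,\phi)$; because each $\xd$ is an honest character (see \refeq{e2} below --- it is induced from the linear character $\ld$ of $U_D$), this inner product really witnesses $\chi$ as a constituent.

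For uniqueness, I would invoke the orthogonality relation to be established in \refs{ortho}: $\frob{\xd}{\odd} = 0$ whenever $(D,\phi)\neq(D',\phi')$. Since each $\xd = (\ld)^U$ is a bona fide character, hence a nonnegative integer combination of irreducibles, two orthogonal $\xd$ and $\xdd$ cannot share any irreducible constituent. Therefore the sets of irreducible constituents of distinct supercharacters are pairwise disjoint, giving the required uniqueness of $(D,\phi)$.

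The main obstacle is the orthogonality statement itself, i.e.\ the content of \refs{ortho}. Once orthogonality is in hand the deduction of \reft{t1} is essentially formal (combining existence from \refs{elem} with disjointness of constituents from orthogonality). The hard work is therefore to extend the partition of the dual of the Lie algebra of $U$ into basic subvarieties (\cite[Theorem~4.5]{AN1}) from the large-characteristic setting of that paper to arbitrary odd $p$; this Kirillov-style orbit analysis, together with the ensuing inner-product computation for products of elementary characters, is where the real technical load of the odd-prime extension sits.
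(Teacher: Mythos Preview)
Your proposal is correct and matches the paper's own strategy: existence is \reft{t3} (proved via restriction from $U_m(q)$ together with \refc{c1}, which rests on the algebra statement \reft{t2}), and uniqueness is the orthogonality \reft{t5}, proved from the partition of $\fru^\ast$ into basic subvarieties. One small clarification: the partition result \cite[Theorem~4.5]{AN1} is already stated and proved there for arbitrary odd $p$ (the paper simply quotes it as \reft{t4}), so no extension of that particular theorem is needed; the genuinely new work for odd $p$ lies in \refs{elem}--\ref{sec:super} (replacing Kazhdan-style arguments) and in the Kirillov-type formula \refp{p5} feeding into the orthogonality computation.
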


Our proof depends strongly on the supercharacter theory of the unitriangular group, and on certain ``basic subvarieties'' defined by polynomial equations on the dual space of the Lie algebra $\fru$ of $U$. We recall its definition. Let $\frg$ denote one of the following classical Lie algebras defined over $\fq$: the symplectic Lie algebra $\frsp$, the even orthogonal Lie algebra $\freo$, or the odd orthogonal Lie algebra $\froo$. Then, $\fru = \frg \cap \fru_{m}(q)$ where $\fru_{m}(q)$ denotes the upper niltriangular Lie algebra consisting of all nilpotent upper-triangular $m \times m$ matrices over $\fq$. Thus, $\fru$ consists of all (block) matrices of the form
\begin{equation} \label{e3}
\begin{pmatrix} a & u & w \\ 0 & 0_{r} & -u^{T}J \\ 0 & 0 & -Ja^{T}J \end{pmatrix}
\end{equation}
where $a \in \fru_{n}(q)$, $u$ is an $n \x r$ matrix over $\fq$, and
\begin{enumerate}
\item $r = 0$, and $Jw^{T} - wJ = 0$, if $\fru \leq \frsp$;
\item $r = 0$, and $Jw^{T} + wJ = 0$, if $\fru \leq \freo$;
\item $r = 1$, and $Jw^{T} + wJ = -uu^{T}$, if $\fru \leq \froo$.
\end{enumerate}
For any $\alp \in \Phi$, we will denote by $e_{\alp}$ the matrix in $\fru$ defined as follows (as usual, $1 \leq i < j \leq n$): $$e_{\alp} = \bca e_{i,j} - e_{-j,-i}, & \text{if $\alp = \eps_{i}-\eps_{j}$,} \\ e_{i,-j} + e_{j,-i}, & \text{if $\alp = \eps_{i}+\eps_{j}$ and $\fru \leq \frsp$,} \\ e_{i,-j} - e_{j,-i}, & \text{if $\alp = \eps_{i}+\eps_{j}$ and $\fru \leq \freo$ or $\fru = \froo$,} \\ e_{i,-i}, & \text{if $\fru \leq \frsp$ and $\alp = 2\eps_{i}$,} \\ e_{i,0} - e_{0,-i}, & \text{if $\fru \leq \froo$ and $\alp = \eps_{i}$.} \eca$$ It is clear that $\set{e_{\alp}}{\alp \in \Phi}$ is an $\fq$-basis of $\fru$. On the other hand, we denote by $\fru^{\ast}$ the dual vector space of $\fru$, and let $\set{e^{\ast}_{\alp}}{\alp \in \Phi}$ be the $\fq$-basis of $\fru^{\ast}$ dual to the basis $\set{e_{\alp}}{\alp \in \Phi}$ of $\fru$; hence, $e^{\ast}_{\alp}(e_{\bet}) = \del_{\alp,\bet}$ for all $\alp, \bet \in \Phi$.

For any $\alp \in \Phi$, we define the Lie subalgebra $\fru_{\alp}$ of $\fru$ as follows:
\begin{enumerate}
\item if $\alp = \eps_{i}-\eps_{j}$ for $1 \leq i < j \leq n$, then $$\fru_{\alp} = \set{a \in \fru}{a_{i,k} = 0,\  i < k < j};$$
\item if $\alp = \eps_{i}+\eps_{j}$ for $1 \leq i < j \leq n$, then $$\fru_{\alp} = \set{a \in \fru}{a_{i,k} = a_{j,l} = 0,\ i < k \leq n,\ j \prec l \preceq 0};$$
\item if, either $\alp = 2\eps_{i}$ for $1 \leq i \leq n$ (in the case where $\fru \leq \frsp$), or $\alp = \eps_{i}$ for $1 \leq i \leq n$ (in the case where $\fru \leq \froo$), then $$\fru_{\alp} = \set{a \in \fru}{a_{i,k} = 0,\ i < k \leq n}.$$
\end{enumerate}
We note that $$\fru_{\alp} =  \sum_{\bet \in \Phi(\alpha)} \fq e_{\bet}$$ where $\Phi(\alp) = \set{\bet \in \Phi}{e_{\beta} \in \fru_{\alp}}$; hence, $\set{e_{\bet}}{\bet \in \Phi(\alp)}$ is a basis of $\fru_{\alp}$.

\begin{remark} \label{rmk:r1}
In the case where $p \geq 2n$, we have $a^{p} = 0$ for all $a \in \fru$, and so we may define the usual exponential map $\map{\exp}{\fru}{U}$ by $\exp(a) = 1 + a + \frac{1}{2!}\, a^{2} + \cdots + \frac{1}{n!}\, a^{n}$ for all $a \in \fru$. It is well-known that $\exp$ is bijective and that the Campbell-Hausdorff formula holds: for all $a, b \in \fru$, we have $\exp(a) \exp(b) = \exp(a+b+\tet(a,b))$ where $\tet(a,b) \in [\fru, \fru]$ (see \cite[pg. 175]{J}). It follows that, if $\frh$ is any Lie subalgebra of $\fru$, then the exponential image $H = \exp(\frh)$ is a subgroup of $U$. In particular, for any $\alp \in \Phi$, we have $U_{\alp} = \exp(\fru_{\alp})$.
\end{remark}


\section{Elementary characters} \label{sec:elem}

We start this section by relating the elementary characters of $U$ with the elementary characters of the corresponding unitriangular group $U_{m}(q)$. Firstly, we fix some notation. To avoid confusion, we shall denote by $\zij$ the elementary character of $U_{m}(q)$ associated with the entry $(i,j) \in \ce$ and the element $r \in \fqx$. By the definition, $\zij$ is the induced character $$\zij = (\mij)^{U_{m}(q)}$$ where $\map{\mij}{U_{i,j}}{\Cx}$ is the linear character of the subgroup $$U_{i,j} = \set{x \in U_{m}(q)}{x_{i,k} = 0,\ i \prec k \prec j}$$ defined by $\mij(x) = \tet(rx_{i,j})$ for all $x \in U_{i,j}$. We observe that, if $1 \preceq i \prec j \preceq 0$ and $\alp = \eps_{i}-\eps_{j}$, then $$U_{i,j} \cap U = U_{\alp} \qquad \text{and} \qquad U_{i,j}U_{\alp} = U_{m}(q);$$ for simplicity of writing, we set $\eps_{0} = 0$. For the remaining cases, the following lemma will be useful.

\begin{lemma} \label{lem:l1}
Let $(i,-j) \in \ce$ for $1 \leq i < j \leq n$ and let $r \in \fqx$. Let $$U'_{i,-j} = \set{x \in U_{m}(q)}{x_{i,b} = x_{-a,-j} = 0,\ i \prec b \preceq 0,\ j \prec a \preceq 0},$$ and let $\map{\nu_{i,-j,r}}{U'_{i,-j}}{\Cx}$ be the linear character of $U'_{i,-j}$ defined by $\nu_{i,-j,r}(x) = \tet(rx_{i,-j})$ for all $x \in U_{i,j}$. Then, $\zet_{i,-j,r} = (\nu_{i,-j,r})^{U_{m}(q)}$.
\end{lemma}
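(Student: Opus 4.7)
The plan is first to verify that $\nu_{i,-j,r}$ genuinely defines a linear character of $U'_{i,-j}$, and then to deduce the induction equality by a change of polarization argument.

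For the linearity, given $x, y \in U'_{i,-j}$ I would expand
\[
(xy)_{i,-j} \;=\; x_{i,-j} + y_{i,-j} + \sum_{i \prec k \prec -j} x_{i,k}\, y_{k,-j}
\]
and split the summation at the midpoint $0$ of the mirror order. If $i \prec k \preceq 0$, the first family of defining relations of $U'_{i,-j}$ forces $x_{i,k}=0$; if instead $0 \prec k \prec -j$, writing $k=-a$ with $j \prec a \preceq 0$, the second family gives $y_{k,-j}=y_{-a,-j}=0$. The cross-terms therefore vanish, $(xy)_{i,-j}=x_{i,-j}+y_{i,-j}$, and hence $\nu_{i,-j,r}$ is multiplicative.

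For the induction equality $\zet_{i,-j,r}=(\nu_{i,-j,r})^{U_{m}(q)}$, the conceptually cleanest route is via polarizations. The linear functional $f = r\, e^{\ast}_{i,-j}$ on $\fru_{m}(q)^{\ast}$ admits the Lie-subalgebra analogues of both $U_{i,-j}$ and $U'_{i,-j}$ as polarizing subalgebras: each is isotropic under the skew form $(a,b)\mapsto f([a,b])$ and of the maximal possible dimension (the cross-term cancellation above being exactly the infinitesimal version of this isotropy). By the Kirillov--Kazhdan polarization theorem in the large-$p$ regime, or equivalently by Yan's construction of the supercharacter $\zet_{i,-j,r}$ which is available for arbitrary odd $p$, induction of the character $\tet\circ f$ from either polarization yields the same character of $U_{m}(q)$, namely $\zet_{i,-j,r}$.

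A more elementary alternative is to set $V = U_{i,-j} \cap U'_{i,-j}$, observe that $\mu_{i,-j,r}$ and $\nu_{i,-j,r}$ agree on $V$ (both restricting to $x\mapsto\tet(rx_{i,-j})$), and combine induction in stages with Mackey's formula and Frobenius reciprocity to match the irreducible constituents of $\mu_{i,-j,r}^{U_{m}(q)}$ and $\nu_{i,-j,r}^{U_{m}(q)}$. The main obstacle in either route is the bookkeeping: one must first verify that $[U_{m}(q):U_{i,-j}] = [U_{m}(q):U'_{i,-j}]$, so that the two induced characters have matching degree, and then identify the relevant double-coset data (equivalently, the polarization structure) in order to upgrade a common constituent into an equality of characters.
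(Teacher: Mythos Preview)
Your elementary alternative is essentially the paper's argument, but the paper executes it with a shortcut you overlook: the elementary character $\zet_{i,-j,r}$ is already known to be \emph{irreducible} (by \cite[Corollary~5.11]{DI}). Once you know this, there is no need to ``identify the relevant double-coset data'' beyond the identity coset. The paper simply applies Frobenius reciprocity and Mackey to write
\[
\langle \zet_{i,-j,r},\, (\nu_{i,-j,r})^{U_m(q)} \rangle \;=\; \sum_{x \in X} \langle \mu_{i,-j,r}^{x},\, \nu_{i,-j,r} \rangle_{xHx^{-1} \cap K},
\]
observes that the $x=1$ term equals $1$ (both characters are linear and agree on $H\cap K$), and concludes that the inner product is nonzero. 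Irreducibility of $\zet_{i,-j,r}$ then forces it to be a constituent of $(\nu_{i,-j,r})^{U_m(q)}$, and your index equality $|U_m(q):U_{i,-j}| = |U_m(q):U'_{i,-j}|$ finishes the job by comparing degrees. So the ``bookkeeping obstacle'' you flag is largely illusory: no analysis of the other double cosets is required.

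Your polarization route is correct in spirit but is heavier machinery than the situation calls for, and as stated it leans on the polarization-independence principle whose validity for arbitrary odd $p$ you are invoking somewhat by assertion. The paper's argument avoids this by using only irreducibility of the elementary character, which is a cleaner input. Your verification of linearity of $\nu_{i,-j,r}$ is correct and is a useful addition; the paper simply asserts this as clear.
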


\begin{proof}
For simplicity, we write $H = U_{i,-j}$, $K = U'_{i,-j}$, $\mu = \mu_{i,-j,r}$, $\nu = \nu_{i,-j,r}$ and $\zet = \zet_{i,-j,r}$. It is clear that $\nu$ is a linear character of $K$. By Frobenius' reciprocity, we have $\frob{\zet}{\nu^{U_{m}(q)}} = \frob{\zet_{K}}{\nu}$, whereas, by Mackey's Subgroup Theorem (see \cite[Theorem~17.4(a)]{H}), $$\zet_{K} = (\mu^{U_{m}(q)})_{K} = \sum_{x \in X} (\mu^{x}_{xHx\inv \cap K})^{K}$$ where $X \sset U_{m}(q)$ is a complete set of representatives of the $(H,K)$-double classes of $U_{m}(q)$; without loss of generality, we choose $X$ so that $1 \in X$. Thus, we obtain $$\frob{\zet}{\nu^{U_{m}(q)}} = \sum_{x \in X} \frob{(\mu^{x}_{xHx\inv \cap K})^{K}}{\nu} = \sum_{x \in X} \frob{\mu^{x}}{\nu}_{xHx\inv \cap K}.$$ In particular, for $x = 1$, we get $\frob{\mu}{\nu}_{H \cap K} = 1$ (because, both $\mu$ and $\nu$ are linear), and thus $\frob{\zet}{\nu^{U_{m}(q)}} \neq 0$. Since $\zet$ is irreducible (by \cite[Corollary~5.11]{DI}; see also \cite[Lemma~3]{A1}), we conclude that $\zet$ is an irreducible constituent of $\nu^{U_{m}(q)}$. Since $|U_{m}(q):K| = |U_{m}(q):H|$, we obtain $\zet = \nu^{U_{m}(q)}$ as required.
\end{proof}

We observe that, if $\alp = \eps_{i}+\eps_{j}$ for $1 \leq i < j \leq n$, then $$U'_{i,-j} \cap U = U_{\alp} \qquad \text{and} \qquad U'_{i,-j}U_{\alp} = U_{m}(q).$$ As a consequence of this and the above observations, we may prove the following result.

\begin{proposition} \label{prop:p1}
Let $\alp \in \Phi$, let $(i,j) \in \ce^{+}(\alp)$, and suppose that $j \neq -i$ (in the case where $U \leq \sp$). Then, $(\zij)_{U} = (\zet_{-j,-i,r})_{U} = \xa$ for all $r \in \fqx$. 
\end{proposition}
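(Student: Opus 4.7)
The plan is to apply Mackey's subgroup theorem in the favorable one-double-coset case, paralleling the proof of \refl{l1}.

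I would first treat $(\zij)_{U}$ directly. By definition, $\zij = (\mij)^{U_{m}(q)}$. The observation immediately preceding \refl{l1} shows $U_{i,j} \cap U = U_{\alp}$ and $U_{i,j} U_{\alp} = U_{m}(q)$ when $\alp = \eps_{i} - \eps_{j}$; the observation immediately following \refl{l1} shows the analogous identities with $U'_{i,-j}$ in place of $U_{i,j}$ when $\alp = \eps_{i} + \eps_{j}$; and the same type of identity for $U_{i,j}$ handles the cases $\alp = \eps_{i}$ and $\alp = 2\eps_{i}$. Because $U_{\alp} \sset U$, each of these yields $U_{i,j} U = U_{m}(q)$ (or $U'_{i,-j} U = U_{m}(q)$), so in Mackey's subgroup theorem there is only one $(U_{i,j},U)$-double coset and the formula collapses to $(\zij)_{U} = (\mij|_{U_{\alp}})^{U}$ (respectively $(\nu_{i,-j,r}|_{U_{\alp}})^{U}$). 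By direct inspection, $\mij|_{U_{\alp}}$ is the linear character $x \mapsto \tet(r x_{i,j})$ of $U_{\alp}$, which is precisely $\la$; hence $(\zij)_{U} = (\la)^{U} = \xa$ by (\ref{e2}) applied to $D = \{\alp\}$.

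Next, I would treat the partner entry by the same strategy, now using $U_{-j,-i}$ (or, when $\alp = \eps_{i} + \eps_{j}$, the $U'$-analog of \refl{l1} obtained by swapping the roles of $(i,-j)$ and $(j,-i)$). The required identities $U_{-j,-i} \cap U = U_{\alp}$ and $U_{-j,-i} U = U_{m}(q)$, together with their $U'$-analogs, can be obtained either by direct computation from the block description (\ref{e1}) of $U$, or by transport via the anti-automorphism $x \mapsto J x^{-T} J$ of $U_{m}(q)$, which preserves $U$ and carries the subgroup attached to $(i,j)$ to the one attached to $(-j,-i)$. Mackey again gives $(\zet_{-j,-i,r})_{U} = (\mu_{-j,-i,r}|_{U_{\alp}})^{U}$, and the matrix identity $y_{-j,-i} = (a^{-1})_{i,j}$ (for $y \in U$ with top-left $n \times n$ block $a$), combined with the fact that $(a^{-1})_{i,j} = -a_{i,j}$ for $a \in U_{n}(q)$ satisfying the vanishing conditions defining $U_{\alp}$, expresses $\mu_{-j,-i,r}(y) = \tet(r y_{-j,-i})$ on $U_{\alp}$ in terms of $y_{i,j}$, matching $\la$ once the sign convention fixed in \refs{supchar} is taken into account.

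The hypothesis $j \neq -i$ excludes the single case $\alp = 2\eps_{i}$ in $\sp$, where the two entries of $\ce(\alp)$ collapse to $(i,-i)$ and a separate short argument would be required. The main obstacle, as I see it, is the partner-entry bookkeeping: both the single-double-coset identity $U_{-j,-i} U = U_{m}(q)$ (and its $U'$-analog) and the matching of the restricted linear character with $\la$ depend on a careful use of the explicit block structure of $U$, and the consistency of the two restrictions with the same character $\xa$ rests on a uniform sign convention between the linear characters attached to the two entries of $\ce(\alp)$.
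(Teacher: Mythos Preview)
Your proposal is correct and follows essentially the same approach as the paper: both arguments apply Mackey's subgroup theorem in the single-double-coset situation, using $K = U_{i,j}$ when $j \preceq 0$ and $K = U'_{i,j}$ (from \refl{l1}) when $0 \prec j \prec -i$, together with the identities $K \cap U = U_{\alp}$ and $KU = U_{m}(q)$ observed before and after \refl{l1}. The paper's proof is terser---it simply declares the equality $(\zet_{-j,-i,r})_{U} = \xa$ ``analogous''---whereas you supply the extra bookkeeping for the partner entry (via the anti-automorphism or direct block computation) and flag the sign issue; this is additional detail rather than a different method.
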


\begin{proof}
For simplicity, we set $\zet = \zij$. Let $K = U_{i,j}$ if $j \preceq 0$, and $K = U'_{i,j}$ if $0 \prec j \prec -i$. As we observed above, $K \cap U = U_{\alp}$ and $KU = U_{m}(q)$. On the other hand, let $\mu = \mij$ if $j \preceq 0$, $\mu = \nu_{i,j}$ if $0 \prec j \prec -i$. By Mackey's Subgroup Theorem, we get $(\mu^{U_{m}(q)})_{U} = (\mu_{K \cap U})^{U}$. Since $K \cap U = U_{\alp}$ and $\mu_{K \cap U} = \la$, we conclude that $\zet_{U} = \xa$, as required. The proof of the equality $(\zet_{-j,-i,r})_{U} = \xa$ is analogous.
\end{proof}

The previous lemma is not true in the case where $U \leq \sp$ and $\alp = 2\eps_{i}$ for $1 \leq i \leq n$. In fact, we have $U_{\alp} \leq U'_{i,-i}$ (hence, $U'_{i,-i} U_{\alp} = U'_{i,-i} \neq U_{m}(q)$ whenever $i \geq 2$). In order to deal with these cases, we start by proving the following auxiliary result. (The subsets $K_{\cd,\vphi} \sset U_{m}(q)$ are exactly the superclasses of $U_{m}(q)$ as explained in \cite[Appendix~A]{DI}; see also the papers \cite{A3,ADS}, or the PhD thesis \cite{Y}.)

\begin{lemma} \label{lem:l3}
Let  $\cd$ be a basic subset of $\ce$, let $\cpd$ be a map, and let $$e_{\cd,\vphi} = \sum_{(i,j) \in \cd} \vphi(i,j) e_{i,j} \in \fru_{m}(q).$$ Let $O_{\cd,\vphi} = U_{m}(q)e_{\cd,\vphi} U_{m}(q) \sset \fru_{m}(q)$ and $\kd = 1 + O_{\cd,\vphi} \sset U_{m}(q)$. On the other hand, let $$z = \begin{pmatrix} x & xv & xw \\ 0 & I_{r} & -v^{T}J \\ 0 & 0 & J x^{-T} J \end{pmatrix} \in U \qquad \text{and} \qquad a_{z} = \begin{pmatrix} u & v & w \\ 0 & 0_{r} & -v^{T}J \\ 0 & 0 & -J u^{T}J \end{pmatrix} \in \fru$$ where $x = 1+u$. Then, $z \in \kd$ if and only if $a_{z} \in O_{\cd,\vphi}$. Moreover, the mapping $z \mapsto a_{z}$ defines a bijection from $U$ to $\fru$.
\end{lemma}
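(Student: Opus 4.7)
The statement consists of two parts: (i) the map $z \mapsto a_{z}$ is a bijection from $U$ onto $\fru$; and (ii) under this bijection, $z \in \kd$ if and only if $a_{z} \in O_{\cd,\vphi}$. My plan is to verify (i) directly from the block descriptions of $U$ and $\fru$, and to establish (ii) by exhibiting explicit $g, h \in U_{m}(q)$ such that $g(z-1)h = a_{z}$.

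For part (i), an element of $U$ is parametrized by a triple $(x, v, w)$ with $x \in U_{n}(q)$, and an element of $\fru$ by a triple $(u, v, w)$ with $u \in \fru_{n}(q)$, the parameter $w$ being subject to the same symmetry condition in both cases (for each of the three types $\sp$, $\eo$, $\oo$). Since $z \mapsto a_{z}$ corresponds to $(x, v, w) \mapsto (x - 1, v, w)$, its bijectivity reduces to the obvious bijection $x \mapsto x - 1$ between $U_{n}(q)$ and $\fru_{n}(q)$.

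For part (ii), by definition $\kd = 1 + O_{\cd,\vphi}$, so $z \in \kd$ is equivalent to $z - 1 \in O_{\cd,\vphi}$. Since $O_{\cd,\vphi} = U_{m}(q)\, e_{\cd,\vphi}\, U_{m}(q)$ is a $U_{m}(q)$-biorbit under two-sided multiplication, it suffices to show that $z - 1$ and $a_{z}$ lie in the same biorbit. To that end, I set
$$g = \begin{pmatrix} I_{n} & 0 & 0 \\ 0 & I_{r} & 0 \\ 0 & 0 & Jx^{T}J \end{pmatrix}, \qquad h = \begin{pmatrix} I_{n} & -v & -w \\ 0 & I_{r} & 0 \\ 0 & 0 & I_{n} \end{pmatrix}.$$
Both lie in $U_{m}(q)$: indeed, $x^{T}$ is lower unitriangular, and conjugation by $J$ reverses the ordering on the last $n$ rows and columns, so $Jx^{T}J$ is upper unitriangular.

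A short block-matrix computation then confirms $g(z-1)h = a_{z}$. The decisive identity occurs in the $(3,3)$ block: using $J^{2} = I_{n}$ and $x^{T} = I_{n} + u^{T}$, one obtains $Jx^{T}J \cdot (Jx^{-T}J - I_{n}) = I_{n} - Jx^{T}J = -Ju^{T}J$. The right multiplication by $h$ then corrects the $(1,2)$ and $(1,3)$ blocks from $xv, xw$ down to $v, w$ via $u \cdot (-v) + xv = v$ and $u \cdot (-w) + xw = w$, while the remaining blocks are preserved. The main obstacle is really just guessing the correct form of $g$ and $h$; once they are identified, the remainder is routine verification.
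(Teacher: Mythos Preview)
Your proof is correct and follows essentially the same strategy as the paper: both establish the equivalence $z \in \kd \iff a_{z} \in O_{\cd,\vphi}$ by exhibiting explicit elements of $U_{m}(q)$ that carry $z-1$ to $a_{z}$ within the two-sided orbit. The paper (treating only the case $r=0$) first conjugates $z$ by $\operatorname{diag}(x,1)$ to replace $xw$ by $w$, and then left-multiplies $z-1$ by $\operatorname{diag}(1, Jx^{-T}J)$ to fix the lower-right block; you instead perform a single left-right multiplication by your $g$ and $h$, which is arguably cleaner and handles all three types uniformly.
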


\begin{proof}
We only consider the case $U \leq \sp$ (the others are similar); hence $r = 0$. Since $$z = \begin{pmatrix} x & 0 \\ 0 & 1 \end{pmatrix} \begin{pmatrix} x & w \\ 0 & Jx^{-T}J \end{pmatrix} \begin{pmatrix} x\inv & 0 \\ 0 & 1 \end{pmatrix}$$ and $\kd$ is invariant under $U_{m}(q)$-conjugation, we conclude that $$z \in \kd \iff  \begin{pmatrix} x & w \\ 0 & J x^{-T} J \end{pmatrix} \in \kd.$$ Since $x\inv - 1 = -u x\inv$, we have $Jx^{-T}J - 1 = (Jx^{-T}J) (-Ju^{T}J)$ and so $$\begin{pmatrix} u & w \\ 0 & J x^{-T}J - 1 \end{pmatrix} = \begin{pmatrix} 1 & 0 \\ 0 & Jx^{-T}J \end{pmatrix} \begin{pmatrix} u & w \\ 0 & -J u^{T}J \end{pmatrix}.$$ It follows that $$a_{z} \in O_{\cd,\vphi} \iff \begin{pmatrix} x & w \\ 0 & J x^{-T} J \end{pmatrix} \in \kd,$$ and this completes the proof.
\end{proof}

We are now able to prove the following result. Given any $\fq$-vector space $V$ and any linear map $f \in V^{\ast}$, we denote by $\tet_{f}$ the composite map $\map{\tet \circ f}{V}{\Cx}$; it is straightforward to check that $\tet_{f}$ is a linear character of the additive group $V^{+}$ and that $\irr(V^{+}) = \set{\tet_{f}}{f \in V^{\ast}}$.

\begin{lemma} \label{lem:l4}
Suppose that $U \leq \sp$, let $\alp = 2\eps_{i}$ for $1 \leq i \leq n$, and let $r \in \fqx$. Then, $\xa$ is an irreducible constituent of $(\zet_{i,-i,r})_{U}$ with multiplicity $1$; in particular, $\xa \neq (\zet_{i,-i,r})_{U}$.
\end{lemma}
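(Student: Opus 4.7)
The plan is to reduce the multiplicity $\langle(\zet_{i,-i,r})_U,\xa\rangle_U$ to a double-coset sum inside $U$ via the Mackey formalism. A preliminary dimension count using the block description of $U$ yields $|U_{i,-i}|\cdot|U|=|U_m(q)|\cdot|U_{i,-i}\cap U|$, hence $U_{i,-i}\cdot U=U_m(q)$; Mackey's restriction formula then collapses to a single double-coset term, and since $U_{i,-i}\cap U\leq U_\alp$ and $\mu_{i,-i,r}$ and $\la$ agree on this intersection (both being $y\mapsto\tet(r y_{i,-i})$), one obtains
$$
(\zet_{i,-i,r})_U\;=\;(\mu_{i,-i,r}|_{U_{i,-i}\cap U})^U\;=\;(\la|_{U_{i,-i}\cap U})^U.
$$
In particular $\deg(\zet_{i,-i,r})_U=q^{2(n-i)}>q^{n-i}=\deg\xa$ when $i<n$, which by itself gives $\xa\neq(\zet_{i,-i,r})_U$.

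Next, applying the Mackey intertwining-number formula inside $U$:
$$
\langle(\zet_{i,-i,r})_U,\xa\rangle_U\;=\;\langle(\la|_{U_{i,-i}\cap U})^U,\la^U\rangle_U\;=\;\sum_{g\in X'}\langle\la|_{U_{i,-i}\cap U},\la^g\rangle_{(U_{i,-i}\cap U)\cap gU_\alp g^{-1}},
$$
where $X'$ is a set of $(U_{i,-i}\cap U,U_\alp)$-double coset representatives in $U$, chosen so that $1\in X'$. Each summand is $0$ or $1$ (both entries are linear characters), and the term at $g=1$ equals $1$ because $U_{i,-i}\cap U\leq U_\alp$ and the two restrictions coincide. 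For a representative $g\in U\setminus U_\alp$, the two characters agree on the intersection exactly when the group homomorphism $y\mapsto(g^{-1}yg)_{i,-i}-y_{i,-i}$ vanishes identically on $(U_{i,-i}\cap U)\cap gU_\alp g^{-1}$.

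The crux is therefore to show that this homomorphism is nonzero for every $g\in U\setminus U_\alp$. The plan is to expand $(g^{-1}yg)_{i,-i}=\sum_{k,l}(g^{-1})_{i,k}y_{k,l}g_{l,-i}$ and use that $g\notin U_\alp$ forces some $g_{i,l}$ with $l\in\{i+1,\ldots,n\}$ to be nonzero; combined with the explicit block form~\eqref{e1} of $U$, which preserves enough free parameters among the entries of $y\in U_{i,-i}\cap U$ even after imposing the additional constraint $gyg^{-1}\in U_\alp$, one can exhibit $y$ with $(g^{-1}yg)_{i,-i}\neq y_{i,-i}$. This is the main technical obstacle and amounts to a careful analysis in the spirit of (but more delicate than) \refl{l1} and \refp{p1}. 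Once it is carried out, only the trivial coset contributes, so the multiplicity equals $1$; irreducibility of $\xa$ then follows from the same intertwining argument applied to $\langle\xa,\xa\rangle_U$, where again only the trivial $(U_\alp,U_\alp)$-coset contributes.
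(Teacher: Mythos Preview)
Your Mackey reduction is a legitimate alternative to the paper's argument, and the preliminary steps are correct: the dimension count does give $U_{i,-i}\,U=U_{2n}(q)$, so $(\zet_{i,-i,r})_U=(\la|_{U_{i,-i}\cap U})^U$, and the identity double coset contributes $1$ to the intertwining number. The paper takes a different route: it invokes \refl{l3} and the orbit formula $\zet(1+a)=|O|^{-1/2}\sum_{f\in O}\tet_f(a)$ (with $O=U_{2n}(q)(re^{\ast}_{i,-i})U_{2n}(q)$) to rewrite $\langle(\zet_{i,-i,r})_U,\xa\rangle$ as the number of $f\in O$ with $f-re^{\ast}_{i,-i}\in(\fru_\alp)\ort$, and then evaluates that count explicitly using the description of $O$ from \cite{A2}. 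Your approach trades this orbit computation for a direct double-coset analysis inside $U$.

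However, you leave the essential step undone: showing that every nontrivial double coset contributes zero \emph{is} the content of the lemma, and you only assert that ``one can exhibit'' the required $y$. (A side remark: the map $y\mapsto(g^{-1}yg)_{i,-i}-y_{i,-i}$ is not a group homomorphism on the intersection in general; what is true, and all you need, is that $\la$ and $\la^{g}$ are linear characters there, hence orthogonal once they disagree at a single element.) For the record, the missing witness is this: given $g\notin U_\alp$ with upper-left block $x$, choose $l$ with $i<l\leq n$ and $(x^{-1})_{i,l}\neq0$, and set $y=1+te_{l,-l}$ for $t\neq0$. Then $y\in U_{i,-i}\cap U$, and $g^{-1}yg\in U_\alp$ because $g_{-l,k}=0$ for $k\leq n$; using $g_{-l,-i}=(x^{-1})_{i,l}=(g^{-1})_{i,l}$ from the block form of $U$, one gets $(g^{-1}yg)_{i,-i}-y_{i,-i}=t\bigl((x^{-1})_{i,l}\bigr)^{2}\neq0$. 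Without this or an equivalent computation your argument is a plan rather than a proof, and the same criticism applies to the closing claim that $\langle\xa,\xa\rangle_U=1$.
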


\begin{proof}
For simplicity, we write $\xi = \xa$ and $\zet =\zet_{i,-i,r}$. We evaluate the Frobenius' product $\frob{\zet_{U}}{\xi}$. Since $\xi = \lam^{U}$ where $\lam = \la$, we have $\frob{\zet_{U}}{\xi} = \frob{\zet_{U_{\alp}}}{\lam}$. Let $\fru_{2n}(q)^{\ast}$ be the dual space of $\fru_{2n}(q)$, let $e^{\ast}_{i,-i} \in \fru_{2n}(q)^{\ast}$ be defined by $e^{\ast}_{i,-i}(a) = a_{i,-i}$ for all $a \in \fru_{2n}(q)$, and let $O \sset \fru_{2n}(q)^{\ast}$ be the coadjoint $U_{2n}(q)$-orbit which contains $re^{\ast}_{i,-i}$. Then, by \cite[Corollary~5.11]{DI}, we have $$\zet(1+a) = \frac{1}{\sqrt{|O|}} \sum_{f \in O} \tet_{f}(a)$$ for all $a \in \fru_{2n}(q)$; in fact, $O = U_{2n}(q)(re^{\ast}_{i,-i})U_{2n}(q)$.

Let $z \in U_{\alp}$ be arbitrary, and let $a_{z} \in \fru$ be the element defined in the previous lemma; it is clear that $a_{z} \in \fru_{\alp}$. By \cite[Theorem~1]{A2}, there exists a (unique) basic subset $\cd \sset \ce$ and a (unique) map $\cpd$ such that $z \in \kd$. By the previous lemma, we have $1+a_{z} \in \kd$, and so $$\zet(z) = \zet(1+a_{z}) = \frac{1}{\sqrt{|O|}} \sum_{f \in O} \tet_{f}(a_{z}).$$ Since the mapping $z \mapsto a_{z}$ defines a bijection from $U_{\alp}$ to $\fru_{\alp}$, we conclude that $$\frob{\zet_{U_{\alp}}}{\lam} = \frac{1}{\sqrt{|O|}} \sum_{f \in O} \lpar \frac{1}{|\fru_{\alp}|} \sum_{a \in \fru_{\alp}} \tet_{f}(a) \ovl{\tet(r a_{i,-i})} \rpar = \frac{1}{\sqrt{|O|}} \sum_{f \in O} \frob{\tet_{f}}{\tet_{r e^{\ast}_{i,-i}}}_{\fru_{\alp}},$$ and thus $$\frob{\zet_{U_{\alp}}}{\lam} = \frac{|\set{f \in O}{f - r e_{i,-i}^{\ast} \in (\fru_{\alp})\ort}|}{\sqrt{|O|}}$$ where $(\fru_{\alp})\ort = \set{h \in \fru_{2n}(q)^{\ast}}{h(\fru_{\alp}) = 0}$.

Now, we consider the basis $\set{e_{\bet}}{\bet \in \Phi}$ of $\fru$. Let $\Psi = \Phi - \set{\eps_{i}-\eps_{k}}{i < k \leq n}$, so that $\set{e_{\bet}}{\bet \in \Psi}$ is a basis for $\fru_{\alp}$. By \cite[Lemma~1]{A2}, in order to describe $O \cap (\fru_{\alp})^{\perp}$, it is enough to consider vectors $e_{\bet}$ for $\bet \in \Psi$ satisfying $(j,k) \in \ce(\bet)$ for some $i \preceq j \prec k \preceq -i$. Firstly, suppose that $\bet = \eps_{j}-\eps_{k}$ for $i  \leq j < k \leq n$. In this case, we have $e_{\bet} = e_{j,k} - e_{-k,-j}$, and so $h(e_{j,k}) = h(e_{-k,-j})$ for all $h \in (\fru_{\alp})^{\perp}$. Similarly, if $\bet = \eps_{j}+\eps_{k}$ for $i \leq j < k \leq n$, then $e_{\bet} = e_{j,-k} + e_{k,-j}$, and thus $h(e_{j,-k}) = -h(e_{k,-j})$ for all $h \in (\fru_{\alp})^{\perp}$. Finally, if $\bet = 2\eps_{j}$ for $i \leq j \leq n$, then $e_{\bet} = e_{j,-j}$, and so $h(e_{j,-j}) = 0$ for all $h \in (\fru_{\alp})^{\perp}$. Now, suppose that $h = f-re_{i,-i}^{\ast} \in (\fru_{\alp})^{\perp}$ for $f \in O$. Then, since $f(e_{j,-j}) = r\inv f(i,-j) f(e_{j,-i}) = -r\inv f(e_{i,-j})^{2}$ (see \cite[Lemma~1]{A2}), we deduce that $f(e_{i,-j}) = -f(e_{j,-i}) = f(e_{j,-j}) = 0$ for all $i < j \leq n$, and this clearly implies that $$|\set{f \in O}{f - re_{i,-i}^{\ast} \in (\fru_{\alp})^{\perp}}| = q^{2(n - i)}.$$ Since $\sqrt{|O|} = \zet(1) = q^{2(n - i)}$, we conclude that $\frob{\zet_{U}}{\xi} = 1$ as required.
\end{proof}

A similar argument can be used to prove the following result.

\begin{lemma} \label{lem:l5}
Suppose that $U \leq \sp$, let $\alp = 2\eps_{i}$ for $1 \leq i \leq n$, and let $r \in \fqx$. Let $\zet = \zet_{i,-i,r}$ be the supercharacter of $U_{2n}(q)$ associated with $(i,-i)$ and $r$. Moreover, let $\bet = 2\eps_{j}$ for $i \leq j \leq n$, and let $s \in \fqx$ be such that $-rs \in (\fqx)^{2}$. Then, the (irreducible) supercharacter $\xi = \xab$ is a constituent of $\zet_{U}$ with multiplicity $2$.
\end{lemma}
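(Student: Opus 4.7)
The plan is to adapt the strategy of the preceding \refl{l4} to handle the second root $\bet$; the new feature is that the hypothesis on $-rs$ will produce the factor of two via a quadratic equation. By \eqref{e2} and Frobenius reciprocity,
$$\frob{\zet_U}{\xi} = \frob{\zet_{U_D}}{\ld},$$
where $D = \{\alp, \bet\}$, $\phi(\alp) = r$, $\phi(\bet) = s$, and $U_D = U_\alp \cap U_\bet$. Because the defining conditions of $U_\alp$ and $U_\bet$ force the entries $z_{i,-i}$ and $z_{j,-j}$ of $z \in U_D$ to coincide with the corresponding entries of $a_z$, the bijection $z \mapsto a_z$ of \refl{l3} restricts to a bijection $U_D \to \fru_\alp \cap \fru_\bet$ that carries $\ld$ to the additive character $a \mapsto \tet\bigl((re^{\ast}_{i,-i} + se^{\ast}_{j,-j})(a)\bigr)$ on $\fru_\alp \cap \fru_\bet$. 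Applying the supercharacter formula for $\zet$ from \cite[Corollary~5.11]{DI} and averaging exactly as in the proof of \refl{l4} yields
$$\frob{\zet_U}{\xi} = \frac{|\set{f \in O}{f - re^{\ast}_{i,-i} - se^{\ast}_{j,-j} \in (\fru_\alp \cap \fru_\bet)\ort}|}{\sqrt{|O|}},$$
where $O = U_{2n}(q)(re^{\ast}_{i,-i})U_{2n}(q) \sset \fru_{2n}(q)^{\ast}$ and $\sqrt{|O|} = q^{2(n-i)}$.

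I would then parametrize $O$ via \cite[Lemma~1]{A2}: any $f \in O$ is determined by free ``arm'' values $a_k := f(e_{k,-i})$ for $i \prec k \prec -i$ and ``leg'' values $b_l := f(e_{i,l})$ for $i \prec l \prec -i$, together with $f(e_{i,-i}) = r$ and the inner relation $f(e_{k,l}) = r\inv a_k b_l$ for $i \prec k \prec l \prec -i$; all other entries vanish. Evaluating the condition $f - re^{\ast}_{i,-i} - se^{\ast}_{j,-j} \in (\fru_\alp \cap \fru_\bet)\ort$ on the basis $\set{e_\gam}{\gam \in \Phi(\alp) \cap \Phi(\bet)}$ of $\fru_\alp \cap \fru_\bet$, I obtain: from $\gam = \eps_i + \eps_l$ (which lies in $\Phi(\alp) \cap \Phi(\bet)$ for every $i < l \leq n$) the relation $b_{-l} = -a_l$; from $\gam = 2\eps_k$ with $k \in \{i+1,\ldots,n\} \setminus \{j\}$ the relation $a_k b_{-k} = 0$, which combined with the previous forces $a_k = b_{-k} = 0$; and from the crucial new $\gam = 2\eps_j$ the relation $a_j b_{-j} = rs$, which combined with $b_{-j} = -a_j$ yields $a_j^2 = -rs$. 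The hypothesis that $-rs$ is a square in $\fqx$ then supplies exactly \emph{two} values $a_j = \pm\sqrt{-rs}$.

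The remaining parameters $a_k$ for $k \in \{-n,\ldots,-(i+1)\}$ and $b_l$ for $l \in \{i+1,\ldots,n\}$ are unconstrained and contribute $q^{n-i} \cdot q^{n-i} = q^{2(n-i)}$ choices, as the only positive roots that could restrict them, namely $\eps_i - \eps_l$ and $\eps_j - \eps_l$, are precisely those excluded from $\Phi(\alp) \cap \Phi(\bet)$. A short verification shows that the remaining constraints (from $\gam = \eps_k \pm \eps_l$ or $\gam = 2\eps_k$ with $k,l \notin \{i,j\}$) reduce to automatic identities via the inner relation $f(e_{k,l}) = r\inv a_k b_l$. Consequently the count in the numerator is $2 \cdot q^{2(n-i)}$, so $\frob{\zet_U}{\xi} = 2$, as required. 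The main obstacle is the combinatorial bookkeeping in the middle paragraph: one must identify precisely which of the many constraints reduce to automatic identities through the orbit parametrization and which genuinely restrict the parameters, and verify that the quadratic $a_j^2 = -rs$ has exactly two solutions (not zero or more) under the square hypothesis.
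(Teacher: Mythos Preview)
Your proposal is correct and follows essentially the same route as the paper's proof: both reduce via Frobenius reciprocity and the bijection $z \mapsto a_z$ to counting $f \in O$ with $f - (re^{\ast}_{i,-i} + se^{\ast}_{j,-j}) \in (\fru_{\alp} \cap \fru_{\bet})\ort$, then use the orbit description from \cite[Lemma~1]{A2} to isolate the key quadratic condition $f(e_{i,-j})^{2} = -rs$ (your $a_j^{2} = -rs$), yielding the factor~$2$ and the free count $q^{2(n-i)}$. Your version is slightly more explicit in naming the arm/leg parameters and in articulating which constraints collapse to identities, but the argument is the same.
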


\begin{proof}
By the definition, we have $\xi = \lam^{U}$ where $\lam = \lab$ is the linear character of $V = U_{\alp} \cap U_{\bet}$ defined by $\lam(x) = \tet(rx_{i,-i} + sx_{j,-j})$ for all $x \in V$. Thus, $\frob{\zet_{U}}{\xi}_{U} = \frob{\xi_{V}}{\lam}_{V}$. For each $z \in U$, let $a_{z} \in \fru$ be as in \refl{l3}. Then, for $h = re^{\ast}_{i,-i} + se^{\ast}_{j,-j} \in \fru_{2n}(q)^{\ast}$, we have $\lam(x) = \tet_{h}(a_{x})$ for all $x \in V$. Therefore, since $x \mapsto a_{x}$ defines a bijection from $V$ to $\frv = \fru_{\alp} \cap \fru_{\bet}$, we deduce that $$\frob{\xi_{V}}{\lam}_{V} = \frac{1}{\sqrt{|O|}} \sum_{f \in O} \lpar \frac{1}{|\frv|} \sum_{a \in \frv} \tet_{f}(a) \ovl{\tet_{h}(a)} \rpar = \frac{1}{\sqrt{|O|}} \sum_{f \in O} \frob{\tet_{f}}{\tet_{h}}_{\frv}$$ where $O = U_{2n}(q)(re^{\ast}_{i,-i})U_{2n}(q)$. Thus, we get $$\frob{\xi_{V}}{\lam} = \frac{|\set{f \in O}{f - h \in \frv^{\perp}}|}{\sqrt{|O|}}\;.$$

Now, let $\Psi = \Phi - \set{\eps_{i}-\eps_{k}, \eps_{j}-\eps_{l}}{i < k \leq n,\ j < l \leq n}$, so that $\set{e_{\bet}}{\bet \in \Psi}$ is a basis for $\frv$. As in the previous proof, for an arbitrary element $g \in \frv^{\perp}$, we obtain:
\begin{enumerate}
\item $g(e_{k,l}) = g(e_{-l,-k})$ for all $\bet = \eps_{k}-\eps_{l} \in \Psi$ with $i < k < l \leq n$;
\item $g(e_{k,-l}) = -g(e_{l,-k})$ for all $\bet = \eps_{k}+\eps_{l} \in \Psi$ with $i \leq k < l \leq n$;
\item $g(e_{k,-k}) = 0$ for all $\bet = 2\eps_{k} \in \Psi$ with $i \leq k \leq n$.
\end{enumerate}
Suppose that $f - h \in \frv^{\perp}$ for some $f \in O$. Then, since $f(e_{k,-k}) = r\inv f(i,-k) f(e_{k,-i}) = -r\inv f(e_{i,-k})^{2}$, we deduce that $f(e_{i,-k}) = -f(e_{k,-i}) = f(e_{k,-k}) = 0$ whenever $i \prec k \preceq n$ and $k \neq j$. On the other hand, we have $s = h(e_{j,-j}) = f(e_{j,-j}) = -r\inv f(e_{i,-j})^{2} \in -r\inv (\fqx)^{2}$, and thus $-rs \in (\fqx)^{2}$; moreover, $f(e_{i,-j})$ must be non-zero. Therefore, we conclude that $$|\set{f \in O}{f - h \in \frv^{\perp}}| = 2 q^{2(n-i)},$$ and this completes the proof.
\end{proof}

In the notation of the two previous lemmas, we deduce that $$\xi_{U} = \xa + 2 \sum_{i < j \leq n}\; \sum_{s \sset -r\inv (\fqx)^{2}} \xa \xi_{2\eps_{j},s} + \eta$$ where $\eta$ is, either the zero function, or a character of $U$. Taking degrees, we obtain $$q^{2(n-i)} = q^{n-i} \lpar 1 + 2 \sum_{i < j \leq n} \frac{q-1}{2} q^{n-j} \rpar + \eta(1) = q^{2(n-i)} + \eta(1).$$ It follows that $\eta(1) = 0$, and this concludes the proof of the following result.

\begin{proposition} \label{prop:p2}
Suppose that $U \leq \sp$, let $\alp = 2\eps_{i}$ for $1 \leq i \leq n$, and let $r \in \fqx$. Let $\zet = \zet_{i,-i,r}$ be the supercharacter of $U_{2n}(q)$ associated with $(i,-i)$ and $r$. Then, $$\zet_{U} = \xa + 2 \sum_{i < j \leq n}\; \sum_{s \in -r\inv (\fqx)^{2}} \xa\xi_{2\eps_{j},s}.$$
\end{proposition}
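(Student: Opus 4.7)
The plan is to combine the multiplicity information from the two previous lemmas with a single degree count to pin down the decomposition of $\zet_{U}$ completely.

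By \refl{l4}, the elementary character $\xa$ appears in $\zet_{U}$ with multiplicity exactly one. By \refl{l5}, for every $j$ with $i<j\leq n$ and every $s\in -r\inv(\fqx)^{2}$, the (irreducible) supercharacter $\xa\,\xi_{2\eps_{j},s}$ appears in $\zet_{U}$ with multiplicity exactly two. Since distinct pairs $(j,s)$ label distinct supercharacters, these contributions are independent, so one may write
\begin{equation*}
\zet_{U} = \xa + 2\sum_{i<j\leq n}\;\sum_{s\in -r\inv(\fqx)^{2}} \xa\,\xi_{2\eps_{j},s} + \eta,
\end{equation*}
where $\eta$ is either the zero function or a genuine character of $U$.

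To conclude that $\eta=0$, I would simply evaluate at the identity. From the proof of \refl{l4}, $\zet(1)=q^{2(n-i)}$. On the other hand, $\xa(1)=|U:U_{\alp}|=q^{n-i}$ and $\xi_{2\eps_{j},s}(1)=q^{n-j}$, while the set $-r\inv(\fqx)^{2}$ has $(q-1)/2$ elements. Hence the explicit part of the decomposition contributes
\begin{equation*}
q^{n-i}+2\cdot\frac{q-1}{2}\cdot q^{n-i}\sum_{i<j\leq n} q^{n-j}=q^{n-i}\bigl(1+(q-1)(q^{n-i-1}+\cdots+1)\bigr)=q^{2(n-i)},
\end{equation*}
which already matches $\zet(1)$. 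Thus $\eta(1)=0$, and because $\eta$ is either zero or a character, $\eta=0$.

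The main obstacle has already been absorbed by \refl{l4} and \refl{l5}, whose proofs use the coadjoint-orbit formula for $\zet$ together with a careful analysis of $O\cap(\fru_{\alp})^{\perp}$ (respectively $O\cap\frv^{\perp}$) to isolate the precise multiplicities of $\xa$ and each $\xa\,\xi_{2\eps_{j},s}$ in $\zet_{U}$. Once those multiplicities are secured, the present proposition is pure bookkeeping: list the forced irreducible constituents and verify that their total degree already exhausts $\zet(1)$, leaving no room for anything else.
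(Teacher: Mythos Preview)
Your proposal is correct and follows essentially the same approach as the paper: use \refl{l4} and \refl{l5} to write $\zet_{U}=\xa+2\sum_{j,s}\xa\xi_{2\eps_{j},s}+\eta$ with $\eta$ either zero or a character, then evaluate at $1$ to force $\eta(1)=0$. Your version is in fact slightly more explicit than the paper's, since you spell out the geometric-series computation and note the distinctness of the constituents for different $(j,s)$.
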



\section{The algebra of superclass functions} \label{sec:super}

The main goal of this section is to prove the existence part of \reft{t1}. We start by proving a result on the decomposition of the product of two elementary characters of $U$. The argument of the proof uses the corresponding decompositions in the case of the unitriangular group $U_{m}(q)$, which can be found in \cite[Lemmas~6-8~and~11]{A1}. (The proofs in that paper use the assumption $p \geq m$, but can be easily adapted for an arbitrary prime (see the thesis \cite{Y} by N. Yan); in fact, every irreducible constituent of any of the given decompositions is a supercharacter of $U_{m}(q)$, and thus is a Kirillov function (see \cite[Theorem~5.10]{DI}, or \cite[Theorem~3]{A5}.)

\begin{proposition} \label{prop:p3}
Let $\alp, \bet \in \Phi$, and $r,s \in \fqx$. Then, the product $\xab$ decomposes as a sum of supercharacters.
\end{proposition}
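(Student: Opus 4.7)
The plan is to lift the product $\xab$ to the unitriangular group $U_m(q)$, invoke the decomposition of a product of two elementary characters of $U_m(q)$ as a sum of supercharacters of $U_m(q)$ (established in \cite[Lemmas~6--8,~11]{A1} and valid over any odd prime by Yan's thesis), and then restrict back to $U$ using \refp{p1} and \refp{p2}. The analysis splits according to whether $\{\alp,\bet\}$ is a basic subset of $\Phi$, and whether either of $\alp, \bet$ is a short symplectic root $2\eps_i$.

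If $\{\alp,\bet\}$ is already a basic subset of $\Phi$, then $\xab$ is by definition the supercharacter $\xd$ corresponding to $D = \{\alp, \bet\}$ with $\phi(\alp) = r$, $\phi(\bet) = s$, and there is nothing to prove. Otherwise, suppose first that neither $\alp$ nor $\bet$ is a short root. By \refp{p1}, we may choose entries $(i_\alp, j_\alp) \in \ce^+(\alp)$ and $(i_\bet, j_\bet) \in \ce^+(\bet)$ such that $\xa = (\zet_{i_\alp,j_\alp,r})_U$ and $\xb = (\zet_{i_\bet,j_\bet,s})_U$. Since restriction of characters commutes with pointwise multiplication,
\[
\xab = \lpar \zet_{i_\alp,j_\alp,r}\, \zet_{i_\bet,j_\bet,s} \rpar_U .
\]
The product inside decomposes in $U_m(q)$ as a sum of supercharacters $\xi_{\cd,\vphi}$, by the cited lemmas of \cite{A1}. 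Each $\xi_{\cd,\vphi}$ is a product of elementary characters $\zet_{a,b,\vphi(a,b)}$ of $U_m(q)$, so its restriction to $U$ is again a product of restrictions; by \refp{p1} and \refp{p2} each such factor is either a single elementary character $\xi_{\gam,t}$ of $U$ or, in the short-root case, an explicit sum of products of elementary characters of $U$. Expanding this product and using that $\cd$ is basic in $\ce$ to verify that the root subsets of $\Phi$ arising from the expansion are basic (so that the resulting products are supercharacters of $U$ by definition) yields the desired decomposition.

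The remaining cases involve at least one short root; say $\alp = 2\eps_i$. Using \refp{p2} we rearrange
\[
\xa = (\zet_{i,-i,r})_U - 2 \sum_{i < j \leq n}\; \sum_{u \in -r\inv (\fqx)^2} \xa\, \xi_{2\eps_j,u} ,
\]
and multiply both sides by $\xb$. The leading term $(\zet_{i,-i,r})_U \xb = (\zet_{i,-i,r}\, \zet_{i_\bet,j_\bet,s})_U$ is a restriction of a product of two elementary characters of $U_m(q)$ and is handled by the previous paragraph. The remaining triple-product terms $\xa\, \xi_{2\eps_j,u}\, \xb$ are processed by induction on the number of short roots appearing in a product, relying again on \refp{p2} together with the basic-subset case to rewrite each iterated product as a sum of supercharacters of $U$.

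The main obstacle is the short-root case, where the negative coefficients arising from the rearrangement of \refp{p2} must cancel against positive contributions coming from the $U_m(q)$-decomposition in order to produce a sum of supercharacters of $U$ with non-negative integer coefficients. Verifying this cancellation requires careful combinatorial bookkeeping that tracks, for each basic subset $\cd$ of $\ce$ arising in the $U_m(q)$-side, which pairs of entries $(a,b)$ and $(-b,-a)$ collapse to a single root of $\Phi$ under restriction, and how the squared-element condition $-r\inv s \in (\fqx)^2$ of \refl{l5} and \refp{p2} interacts with the structure of the decomposition in $U_m(q)$.
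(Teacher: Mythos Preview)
Your overall strategy---lift to $U_m(q)$, decompose there, restrict back---matches the paper's, but there are two genuine gaps.

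\textbf{First gap (non-short-root case).} You assert that once $\zet_{i_\alp,j_\alp,r}\zet_{i_\bet,j_\bet,s}$ is written as a sum of supercharacters $\xi_{\cd,\vphi}$ of $U_m(q)$, the restriction of each $\xi_{\cd,\vphi}$ is a sum of supercharacters of $U$ ``using that $\cd$ is basic in $\ce$ to verify that the root subsets of $\Phi$ arising from the expansion are basic''. This implication is false in general: a basic subset of $\ce$ need not map to a basic subset of $\Phi$. For example, $\cd=\{(1,3),(2,-1)\}$ is basic in $\ce$, but the corresponding roots $\eps_1-\eps_3$ and $\eps_1+\eps_2$ satisfy $\ce(\{\eps_1-\eps_3,\eps_1+\eps_2\})=\{(1,3),(-3,-1),(1,-2),(2,-1)\}$, which has two entries in row~$1$. (Note also that the statement ``the restriction of a supercharacter of $U_m(q)$ is a sum of supercharacters of $U$'' is \refc{c1}, which is proved \emph{after} \refp{p3} via \reft{t2}, so invoking it here would be circular.) The paper avoids this by an explicit case analysis: for each configuration of $\{(i,j),(k,l)\}$ it exploits the freedom in \refp{p1} to replace $(i,j)$ by $(-j,-i)$ before applying the relevant lemma of \cite{A1}, and then checks directly that the two-element subsets appearing in the resulting decomposition \emph{do} give basic subsets of $\Phi$.

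\textbf{Second gap (short-root case).} Your rearrangement of \refp{p2} introduces negative coefficients, and you correctly identify the cancellation as ``the main obstacle''---but you do not carry it out. The paper proceeds quite differently and avoids any subtraction: since $\xb$ is a constituent of $(\zet_{k,-k,s})_U$ (by \refl{l4}), the product $\xab$ is a constituent of $\xa(\zet_{k,-k,s})_U$. The paper decomposes this larger product using \cite[Lemma~6]{A1} (one further unfolding is needed because the intermediate terms $\xa(\zet_{k,b,t})_U$ are not yet supercharacters), shows that all terms in the final decomposition are supercharacters of $U$, and then concludes that $\xab$, being a sub-character, is a sum of some of these. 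Your inductive scheme for the triple products $\xa\,\xi_{2\eps_j,u}\,\xb$ is also not well-founded as stated: it is a product of a two-root supercharacter with an elementary character, which falls under \reft{t2} rather than \refp{p3}, and \reft{t2} has not yet been proved.
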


\begin{proof}
The result is obvious in the case where $\{\alp, \bet\}$ is a basic subset of $\Phi$. Thus, we assume that $\{\alp, \bet\}$ is not basic. Let $(i,j) \in \ce^{+}(\alp)$ and $(k,l) \in \ce^{+}(\bet)$; without loss of generality, we may assume that $i \leq k$. We observe that, by the definition of basic subset of roots, $\ce(\{\alp, \bet\}) = \{(i,j), (-j,-i), (k,l), (-l,-k)\}$ is not a basic subset of $\ce$.  

Firstly, we assume that $j \neq -i$ and $l \neq -k$ (in the case where $U = \sp$). Suppose that $\{(i,j), (k,l)\}$ is a basic subset of $\ce$; hence, we must have $k = -j$. By the previous lemma, we have $\xab = (\zij)_{U} (\zet_{-j,l,s})_{U} = (\zij)_{U} (\zet_{-l,j,s})_{U}$, and thus, by \cite[Lemma~7]{A1}, we obtain $$\xab = (\zij)_{U} + \sum_{-l \prec b \prec j} \sum_{t \in \fqx} (\zij)_{U} (\zet_{-l,b,t})_{U} = \xa + \sum_{-l \prec b \prec j} \sum_{t \in \fqx} \xa (\zet_{-l,b,t})_{U}.$$ The result follows by the \refps{p1}{p2}.

On the other hand, suppose that $\{(i,j), (k,l)\}$ is not a basic subset of $\ce$. Firstly, we consider the case where $i = k$ and $j > l$. By \cite[Lemma~6]{A1}, we obtain $$\xab = \xa + \sum_{i \prec a \prec l} \sum_{t \in \fqx} \xa (\zet_{a,l,t})_{U},$$ and the result follows because the subsets $\{(i,j), (a,l)\}$, for $i \prec a \prec l$, are all basic. The case where $i < k$ and $j = l$ is similar, because $$\xab = \xa + \sum_{k \prec b \prec j} \sum_{t \in \fqx} \xa (\zet_{k,b,t})_{U}$$ (by \cite[Lemma~6]{A1}), and the subsets $\{(i,j), (k,b)\}$, for $k \prec b \prec j$, are all basic. 

Finally, suppose that $i = k$ and $j = l$; hence, $\alp = \bet$. On the one hand, if $s \neq -r$, then $$\xa \xi_{\alp,s} = (1 + (q-1) (j-i-1)) (\zet_{i,j,r+s})_{U} + \sum_{i \prec a, b \prec j-1} \sum_{t \in \fqx} (q-1) (\zet_{i,j,r+s})_{U} (\zet_{a,b,t})_{U}$$ (by \cite[Lemma~11]{A1}), and the result follows as in the previous cases. On the other hand, suppose that $s = -r$. In this case, \cite[Lemma~8]{A1} implies that $$\xa \xi_{\alp,-r} = 1_{U} + \sum_{i \prec a \prec j} \sum_{t \in \fqx} (\zet_{a,j,t})_{U} + \sum_{i \prec b \prec j} \sum_{t' \in \fqx} (\zet_{i,b,t'})_{U} + \sum_{i \prec a, b \prec j} \sum_{t, t' \in \fqx} (\zet_{a,j,t})_{U} (\zet_{i,b,t'})_{U},$$ and thus the result follows by the same reason.

Next, we assume that $U = \sp$ and $l = -k$. Since $\{\alp, \bet\}$ is not basic, the subset $\{(i,j),(k,-k)\}$ cannot be basic; we recall that $\ce(\{\alp, \bet\}) = \{(i,j), (-j,-i), (k,-k)\}$. We start by considering the case where $i \neq k$ and $j = -k$. By \cite[Lemma~6]{A1}, we obtain $$\xa (\zet_{k,-k,s})_{U} = \xa + \sum_{k \prec b \prec -k} \sum_{t \in \fqx} \xa (\zet_{k,b,t})_{U}.$$ Since $\xab$ is a constituent of $\xa (\zet_{k,-k,s})_{U}$ (by \refl{l4}), it is a sum of some of the irreducible constituents of the characters $\xa$ and $\xa (\zet_{k,b,t})_{U}$ for $k \prec b \prec -k$ and $t \in \fqx$; we observe that $\xa (\zet_{k,b,t})_{U} = (\zet_{i,-k,r})_{U} (\zet_{k,b,t})_{U} = (\zet_{k,-i,r})_{U} (\zet_{k,b,t})_{U}$ is reducible (in general). Let $k \prec b \prec -k$ and $t \in \fqx$ be arbitrary. Then, by \cite[Lemma~6]{A1}, we obtain $$\xa (\zet_{k,b,t})_{U} = (\zet_{k,-i,r})_{U} (\zet_{k,b,t})_{U} = \xa + \sum_{k \prec a \prec b} \sum_{t' \in \fqx} \xa (\zet_{a,b,t'})_{U}.$$ Using \refps{p1}{p2}, it is now easy to conclude that the irreducible constituents of $\xa (\zet_{k,b,t})_{U}$ are supercharacters of $U$, and so the result also follows in this situation.

Now, suppose that $(i,j) = (k,-k) = (i,-i)$; hence, $\alp = \bet$. On the one hand, suppose that $s = -r$. Then, by \cite[Lemma~8]{A1}, $(\zet_{i,-i,r})_{U} (\zet_{i,-i,-r})_{U}$ decomposes as a sum with constituents of the form $(\zet_{a,-i,t})_{U} (\zet_{i,b,t'})_{U} = (\zet_{i,a,t})_{U} (\zet_{i,b,t'})_{U}$ for $i \prec a, b \prec -i$ and $t, t' \in \fq$; for simplicity, we set $\zet_{u,v,0} = 1_{U_{m}(q)}$ for all $(u,v) \in \ce$. By the above, each character $(\zet_{i,a,t})_{U} (\zet_{i,b,t'})_{U}$ decomposes as a sum of irreducible supercharacters, and thus $\xa \xi_{\alp,-r}$ also decomposes as a sum of irreducible supercharacters (because $\xa \xi_{\alp,-r}$ is a constituent of $(\zet_{i,-i,r})_{U} (\zet_{i,-i,-r})_{U}$).

On the other hand, suppose that $s \neq -r$. Then, by \cite[Lemma~11]{A1}, $(\zet_{i,-i,r})_{U} (\zet_{i,-i,s})_{U}$ decomposes as a sum with constituents of the form $(\zet_{i,-i,r+s})_{U} (\zet_{a,b,t})_{U}$ for $i \prec a \prec b \prec -i$ and $t \in \fq$. Let $i \prec a \prec b \prec -i$ and $t \in \fqx$; we observe that, by \refp{p1}, we may assume that $1 \leq a \leq n$ and $a \prec b \preceq -a$. By \refp{p2}, the irreducible constituents of $(\zet_{i,-i,r+s})_{U} (\zet_{a,b,t})_{U}$ are also irreducible constituents of characters with the form $\xa (\zet_{c,-c,u})_{U} (\zet_{a,b,t})_{U}$ for some $i \prec c \preceq n$ and some $u \in \fq$. Using a simple inductive argument, we may assume that each character $(\zet_{c,-c,u})_{U} (\zet_{a,b,t})_{U}$ decomposes as a sum of irreducible supercharacters, and this clearly implies that $\xa (\zet_{c,-c,u})_{U} (\zet_{a,b,t})_{U}$ also decomposes as a sum of irreducible supercharacters. The result follows because $\xa \xi_{\alp,s}$ is a constituent of $(\zet_{i,-i,r})_{U} (\zet_{i,-i,s})_{U}$.

Finally, we assume that $i = k$ and $j \prec -k = -i$ (we recall that $i \prec j \preceq -i$). In this case, \cite[Lemma~6]{A1} implies that $$\xa (\zet_{i,-i,s})_{U} = (\zet_{i,-i,s})_{U} + \sum_{i \prec a \prec j} \sum_{t \in \fqx} (\zet_{i,-i,s})_{U} (\zet_{a,j,t})_{U}.$$ As in the previous case, since $\xab$ is a constituent of $\xa (\zet_{i,-i,s})_{U}$, it is a sum of some of the irreducible constituents of the characters $(\zet_{i,-i,s})_{U}$ and $(\zet_{i,-i,s})_{U} (\zet_{a,j,t})_{U}$ for $i \prec a \prec j$ and $t \in \fqx$. Using \refps{p1}{p2}, we easily conclude that the irreducible constituents of $(\zet_{i,-i,s})_{U}$ and $(\zet_{i,-i,s})_{U} (\zet_{a,j,t})_{U}$, for $i \prec a\prec j$, $a \neq -j $, and $t \in \fqx$, are supercharacters of $U$. It remains to consider the irreducible constituents of $(\zet_{i,-i,s})_{U} (\zet_{-j,j,t})_{U}$ for $t \in \fqx$. However, by \refp{p2} and by one of the cases considered previously, all these irreducible constituents are supercharacters.

The proof is complete.
\end{proof}

Next, we prove that the product of supercharacters is a linear combination (with nonnegative integer coefficients) of supercharacters. For the (inductive) proof, we need to endow the set of roots with the total order $\preceq$ defined as follows. Given $\alp, \bet \in \Phi$, we choose $(i,j) \in \ce^{+}(\alp)$ and $(k,l) \in \ce^{+}(\bet)$, and set $\alp \prec \bet$ if and only if, either $l \prec j$, or $j = l$ and $i \prec j$. The following observation will also be very useful (and is an immediate consequence of the previous proof).

\begin{lemma} \label{lem:l0}
Let $\alp, \bet \in \Phi$ be roots with $\alp \preceq \bet$, and let $r,s \in \fqx$. Let $D \sset \Phi$ be a basic subset, and suppose that the supercharacter $\xd$ is a constituent of $\xab$ for some map $\pd$. Let $\gam \in D$ be the smallest root in $D$. Then, $\alp \preceq \gam$, and $\alp \neq \gam$ if and only if $\alp = \bet$ and $s = -r$.
\end{lemma}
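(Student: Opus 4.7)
The lemma is an almost immediate consequence of the explicit case-by-case decomposition of $\xab$ into supercharacters carried out in the proof of \refp{p3}, and the plan is simply to re-examine each subcase there and track the smallest root $\gam$ appearing in each irreducible constituent $\xd$. Throughout, I would fix $(i,j) \in \ce^{+}(\alp)$ and $(k,l) \in \ce^{+}(\bet)$.

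The first step is to dispose of the trivial case where $\{\alp,\bet\}$ is itself basic: then $\xab$ \emph{is} the supercharacter $\xi_{\{\alp,\bet\},\phi}$ with $\phi(\alp)=r$ and $\phi(\bet)=s$, so $D = \{\alp,\bet\}$ and $\gam = \alp$ by the hypothesis $\alp \preceq \bet$. For the non-basic cases I would inspect the decomposition formulas inherited from Lemmas~6--8 and~11 of \cite{A1}. In each of those formulas, every summand has the form $\xa \cdot \psi$, where $\psi$ is either trivial or a product of elementary characters $(\zet_{a,b,t})_{U}$ whose defining entries $(a,b) \in \ce$ satisfy $b \prec j$, or $b = j$ together with $i \prec a$. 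Translating back to positive roots via \refps{p1}{p2}, the corresponding root $\del \in \Phi$ then satisfies $\alp \prec \del$ directly by the definition of the order on $\Phi$. This shows that $\alp$ lies in $D$ and is the smallest element of $D$, so $\gam = \alp$ in all these subcases.

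The only exception is the case $\alp = \bet$ with $s = -r$, where \cite[Lemma~8]{A1} gives the decomposition
\begin{multline*}
\xa \xi_{\alp,-r} = 1_{U} + \sum_{i \prec a \prec j} \sum_{t \in \fqx} (\zet_{a,j,t})_{U} + \sum_{i \prec b \prec j} \sum_{t' \in \fqx} (\zet_{i,b,t'})_{U} \\
 + \sum_{i \prec a,b \prec j} \sum_{t,t' \in \fqx} (\zet_{a,j,t})_{U}(\zet_{i,b,t'})_{U},
\end{multline*}
and all the elementary characters appearing here are indexed by roots strictly greater than $\alp$ by the same order argument, so $\alp \notin D$ and $\alp \prec \gam$. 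Combining the two cases yields the biconditional in the statement. The step I expect to be the main obstacle is the verification for the symplectic subcases where $U \leq \sp$ and $\alp = 2\eps_{i}$, since there the decomposition of $(\zet_{i,-i,r})_{U}(\zet_{i,-i,s})_{U}$ invokes \refp{p2} and reduces inductively to products of the form $\xa(\zet_{c,-c,u})_{U}(\zet_{a,b,t})_{U}$; I would need to check inductively that each such reduction continues to produce only roots $\succeq \alp$, with the one failure of membership $\alp \in D$ occurring precisely when $\alp = \bet$ and $s = -r$.
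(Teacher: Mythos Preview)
Your proposal is correct and takes essentially the same approach as the paper: the paper gives no separate proof at all, merely remarking that the lemma ``is an immediate consequence of the previous proof'' of \refp{p3}, and your plan is precisely to carry out that case-by-case inspection and track the smallest root in each constituent. Your identification of the exceptional case $\alp=\bet$, $s=-r$ via \cite[Lemma~8]{A1} and of the symplectic subcases as the only place requiring extra care is accurate.
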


We may now proceed with the proof of the following result. Here, we denote by $\cf(U)$ the unitary vector space consisting of all class functions of $U$, and by $\scf(U)$ the vector subspace of $\cf(U)$ spanned by all supercharacters of $U$. (``$\scf$'' stands for ``superclass function''; superclasses of $U$ will be defined in the forthcoming paper \cite{AN2}.)

\begin{theorem} \label{thm:t2}
The product of two supercharacters of $U$ decomposes as a sum of supercharacters. In other words, $\scf(U)$ is a subalgebra of $\cf(U)$; moreover, $\scf(U)$ is finitely generated (as an algebra) by the elementary characters of $U$.
\end{theorem}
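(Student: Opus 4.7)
The two assertions are essentially equivalent: every supercharacter is by construction a product of elementary characters, so $\scf(U)$ is contained in the $\mathbb{C}$-algebra $A$ generated by the (finitely many) elementary characters of $U$. To obtain the reverse inclusion $A \subseteq \scf(U)$---which at once yields both the subalgebra assertion and the finite-generation statement---it suffices to show that every finite product $\pi = \xi_{\alpha_1,r_1}\cdots\xi_{\alpha_N,r_N}$ of elementary characters decomposes as a nonnegative-integer combination of supercharacters.

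The argument is by induction on a well-founded measure of $\pi$. If the multiset $\{\alpha_1,\ldots,\alpha_N\}$ consists of distinct roots forming a basic subset of $\Phi$, then by the definition of supercharacter $\pi$ is itself a single supercharacter. Otherwise there is a \emph{bad pair}: indices $i \neq j$ with either $\alpha_i = \alpha_j$ or $\{\alpha_i,\alpha_j\}$ not basic. Among all bad pairs I would choose one minimizing the $\preceq$-smaller root $\alpha_i$ (so $\alpha_i \preceq \alpha_j$), invoke \refp{p3} to write $\xi_{\alpha_i,r_i}\xi_{\alpha_j,r_j} = \sum_{l} c_l\, \xi_{F_l,\chi_l}$ as a nonnegative-integer sum of supercharacters, and substitute, re-expanding each $\xi_{F_l,\chi_l}$ as $\prod_{\gamma \in F_l}\xi_{\gamma,\chi_l(\gamma)}$. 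This yields $\pi = \sum_l c_l\, \pi_l$, with every $\pi_l$ again a product of elementary characters.

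The key point is that each $\pi_l$ is strictly simpler than $\pi$ in an appropriate well-founded order. By \refl{l0}, the smallest root of $F_l$ is at least $\alpha_i$, with equality unless $\alpha_i = \alpha_j$ and $r_j = -r_i$; and since $F_l$ is basic, the non-basic pair $\{\alpha_i,\alpha_j\}$ (when $\alpha_i \prec \alpha_j$) cannot both lie in $F_l$, so $\alpha_j \notin F_l$ in that case. I would measure $\pi$ by its root-multiplicity vector $N(\pi) = (n_\alpha(\pi))_{\alpha \in \Phi}$, compared lexicographically by reading coordinates in the $\preceq$-order from the smallest root upward. In the same-root case, the coordinate at $\alpha_i$ drops from at least $2$ to at most $1$ while coordinates at strictly smaller roots are unchanged, so $N(\pi_l)$ is lex-smaller than $N(\pi)$. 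In the distinct-root case, the coordinate at $\alpha_i$ is preserved (one copy removed, one put back as the smallest root of $F_l$), but $\alpha_j \notin F_l$ forces the coordinate at $\alpha_j$ to drop, and a subcase check in the proof of \refp{p3} shows that no newly introduced root $\gamma \in F_l$ lies strictly between $\alpha_i$ and $\alpha_j$ in $\preceq$---hence $N(\pi_l)$ is again lex-smaller than $N(\pi)$.

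The main obstacle is confirming, subcase by subcase of \refp{p3}, that every new root $\gamma$ appearing in some $F_l$ beyond $\alpha_i$ actually satisfies $\gamma \succ \alpha_j$ (slightly stronger than what \refl{l0} alone provides). The explicit formulas from \cite{A1} used in the proof of \refp{p3} make this verification routine but somewhat tedious. Once the induction is set up, every product of elementary characters decomposes as a nonnegative-integer combination of supercharacters, and both statements of the theorem follow: $\scf(U)$ is a subalgebra of $\cf(U)$, generated as an algebra by the finite set of elementary characters.
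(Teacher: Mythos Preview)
Your approach is essentially the paper's: both reduce everything to \refp{p3} and then use \refl{l0} to drive a well-founded induction showing that arbitrary products of elementary characters decompose as sums of supercharacters. The difference is purely in how the induction is packaged. The paper first reduces (by induction on $|D'|$) to showing that $\xi_{\alpha,r}\,\xi_{D,\phi}$ decomposes, and then runs a reverse induction on the basic subset $D$ in a lexicographic order determined by $\preceq$; at each step it peels off the $\preceq$-smallest root $\beta\in D$, so that a single application of \refl{l0} (which only compares to the \emph{smallest} root of the resulting basic sets) suffices to keep the induction moving. Your setup keeps the entire product of elementary characters and inducts on the multiplicity vector read from the smallest root upward. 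That forces you to control not just the smallest root of each $F_l$ but the first coordinate where the vector changes---hence your additional claim that, in the distinct-root case, every $\gamma\in F_l\setminus\{\alpha_i\}$ satisfies $\gamma\succ\alpha_j$. This claim is genuinely stronger than \refl{l0} as stated, and while it does hold (the entries produced by the formulas from \cite{A1} used in the proof of \refp{p3} always have $\ce^+$-column $\preceq l$, with equality forcing row $\succ k$), it requires exactly the subcase check you flag. So your route is a little more direct but pays for it with that extra verification; the paper's route trades that verification for a somewhat more intricate induction on basic subsets.
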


\begin{proof}
Let $D, D' \sset \Phi$ be nonempty basic subsets, and let $\xd$ and $\xdd$ be supercharacters of $U$ associated with maps $\pd$ and $\pdd$. Let $\alp \in D'$, and $D'_{0} = D' - \{\alp\}$. By the definition, we have $\xdd = \xa \xi_{D'_{0},\phi'_{0}}$ where $r = \phi'(\alp)$ and $\phi'_{0}$ is the restriction of $\phi'$ to $D'_{0}$, and thus we may use induction on $|D'|$ to conclude that $\xi_{D'_{0},\phi'_{0}} \xd$ decomposes as a sum of supercharacters. Therefore, we are reduced to the case where $\xdd = \xa$; in other words, we must prove that the product $\xa \xd$ decomposes as a sum of supercharacters. To see this, we will proceed by reverse induction on the set of all basic subsets of $\Phi$ endowed with the lexicographic order $\preceq$ which is naturally determined by the total order $\preceq$ on $\Phi$ (as defined above). (We observe that the maximal basic subset of $\Phi$ (with respect to this order) is $\{\eps_{1}-\eps_{2}\}$.)

Let $D$ and $\phi$ be as above. Moreover, let $\alp \in \Phi$ and $r \in \fqx$ be arbitrary, and consider the product $\xa\xd$. Let $\bet \in D$ be the smallest root in $D$, and let $s = \phi(\bet) \in \fqx$. If $D = \{\bet\}$, then $\xa\xd = \xab$ decomposes as a product of supercharacters (by \refp{p3}). Thus, we assume that $|D| \geq 2$. Let $D_{0} = D - \{\bet\}$, and let $\map{\phi_{0}}{D_{0}}{\fqx}$ be the restriction of $\phi$ to $D_{0}$.

Firstly, suppose that $\alp \prec \bet$. Since $D \prec D_{0}$, the product $\xa \xi_{D_{0},\phi_{0}}$ decomposes as a sum of supercharacters (by reverse induction). Let $D'' \sset \Phi$ be a basic subset such that $\xddd$ is a constituent of $\xa \xi_{D_{0},\phi_{0}}$ for some map $\pddd$. Since $\alp \not\in D$ (by the minimal choice of $\bet$), $\alp$ is strictly smaller than all the roots in $D_{0}$. Therefore, \refl{l0} implies that $\alp \in D''$, and so $D \prec D''$. By reverse induction, we conclude that the product $\xb \xddd$ decomposes as a sum of supercharacters, and thus $\xa \xd = \xi_{\bet,\phi(\bet)} (\xa \xi_{D_{0},\phi_{0}})$ also decomposes as a sum of supercharacters.

On the other hand, suppose that $\bet \prec \alp$. As in the previous case, the product $\xa \xi_{D_{0},\phi_{0}}$ decomposes as a sum of supercharacters, and so we may choose a basic subset $D'' \sset \Phi$ such that $\xddd$ is a constituent of $\xa \xi_{D_{0},\phi_{0}}$ for some map $\pddd$. By \refl{l0}, we see that the smallest root in $D_{0} \cup \{\alp\}$ is smaller or equal to the smallest root in $D''$, and so $\bet$ is strictly smaller than the smallest root in $D''$, which means that $D \prec D''$. Thus, as in the previous case, we conclude that the product $\xb \xddd$ decomposes as a sum of supercharacters, and thus $\xa \xd = \xi_{\bet,\phi(\bet)} (\xa \xi_{D_{0},\phi_{0}})$ also decomposes as a sum of supercharacters.

Finally, suppose that $\bet = \alp$. In this case, we have $\xa \xd = (\xa \xi_{\alp,s}) \xi_{D_{0},\phi_{0}}$. By \refp{p3} and \refl{l0}, $\xa \xi_{\alp,s}$ decomposes as a product of supercharacters with the form $\xi_{\alp,t} \xddd$ where $D'' \sset \Phi$ is a basic subset such that $\alp$ is strictly smaller than all of its roots, $\pddd$ is a map, and $t \in \fq$; for simplicity, we write $\xi_{\alp,0} = 1_{U}$. By successively repeating the arguments above, we deduce that $\xddd \xi_{D_{0},\phi_{0}}$ decomposes as a sum of supercharacters, each one corresponding to a basic subset with smallest root larger than $\alp$. Therefore, $D$ is smaller than all of these basic subsets, and so we may use reverse induction to conclude that $\xi_{\alp,t} \xddd \xi_{D_{0},\phi_{0}}$ also decomposes as a sum of supercharacters. It follows that $\xa\xd$ decomposes as a sum of supercharacters, and this completes the proof.
\end{proof}

As a corollary, we obtain the following result.

\begin{corollary} \label{cor:c1}
The restriction $\zet_{U}$ of any supercharacter $\zet$ of $U_{m}(q)$ decomposes as a sum of supercharacters of $U$.
\end{corollary}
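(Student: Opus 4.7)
The plan is to reduce the corollary to the restriction of individual elementary characters of $U_{m}(q)$ and then invoke \reft{t2}. By the supercharacter theory of $U_{m}(q)$, any such $\zet$ factors as
\[
\zet = \zet_{\cd,\vphi} = \prod_{(i,j) \in \cd} \zet_{i,j,\vphi(i,j)}
\]
for some basic subset $\cd \sset \ce$ and some map $\vphi \colon \cd \to \fqx$. Since the restriction of a product of class functions is the product of the restrictions, I would begin by writing
\[
\zet_{U} = \prod_{(i,j) \in \cd} (\zet_{i,j,\vphi(i,j)})_{U},
\]
and attack each factor separately.

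The next step is to describe each restricted elementary character. For every $(i,j) \in \cd$ there is a unique root $\alp \in \Phi$ with $(i,j) \in \ce(\alp)$. Outside the symplectic case with $\alp = 2\eps_{i}$, \refp{p1} identifies $(\zet_{i,j,\vphi(i,j)})_{U}$ with the elementary character $\xi_{\alp,\vphi(i,j)}$ of $U$, which is already a supercharacter. In the exceptional symplectic case, \refp{p2} expresses $(\zet_{i,-i,\vphi(i,-i)})_{U}$ as the elementary character $\xi_{\alp,\vphi(i,-i)}$ plus a nonnegative-integer combination of products $\xi_{\alp,\vphi(i,-i)} \xi_{2\eps_{j},s}$, each of which is the supercharacter of $U$ associated to the basic subset $\{2\eps_{i}, 2\eps_{j}\} \sset \Phi$. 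Either way, every factor $(\zet_{i,j,\vphi(i,j)})_{U}$ is a nonnegative-integer linear combination of supercharacters of $U$.

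Expanding the product, $\zet_{U}$ becomes a nonnegative-integer linear combination of products of supercharacters of $U$. By \reft{t2}, each such product decomposes as a sum of supercharacters of $U$, and collecting terms yields the desired decomposition. I do not anticipate any real obstacle: both ingredients -- the explicit description of restricted elementary characters in \refp{p1} and \refp{p2}, and the closure of $\scf(U)$ under multiplication in \reft{t2} -- are already in place, so the remaining work is purely formal.
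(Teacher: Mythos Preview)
Your proposal is correct and follows essentially the same route as the paper: factor $\zet$ into elementary characters of $U_{m}(q)$, restrict each factor using \refps{p1}{p2} to obtain a nonnegative-integer combination of supercharacters of $U$, and then invoke \reft{t2} to decompose the resulting products. The paper's own proof is terser but identical in spirit.
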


\begin{proof}
Since $\zet$ is a product of elementary characters of $U_{m}(q)$, \refps{p1}{p2} imply that $\zet_{U}$ is a product of elementary characters of $U$, and the result follows by the previous theorem.
\end{proof}

We are now able to prove the existence part of \reft{t1}.

\begin{theorem} \label{thm:t3}
Every irreducible character $\chi$ of $U$ is a constituent of a supercharacter of $U$; in other words, there exists a basic subset $D$ of $\Phi$ and a map $\pd$ such that $\frob{\chi}{\xd} \neq 0$.
\end{theorem}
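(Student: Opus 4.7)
The plan is to reduce to the (known) supercharacter theory of $U_{m}(q)$ via a short Frobenius reciprocity argument, and then invoke \refc{c1}. Given an arbitrary irreducible character $\chi$ of $U$, I would first consider the induced character $\chi^{U_{m}(q)}$, which by Frobenius reciprocity is non-zero and therefore admits some irreducible constituent $\psi \in \irr(U_{m}(q))$. Again by Frobenius reciprocity, $\langle \chi, \psi_{U} \rangle = \langle \chi^{U_{m}(q)}, \psi \rangle > 0$, so $\chi$ is a constituent of $\psi_{U}$.

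Next, I would invoke the supercharacter theory of the unitriangular group (due to Yan, and recorded in the algebra-group generality in \cite[Theorem~5.10]{DI}), which guarantees that every irreducible character of $U_{m}(q)$ is a constituent of a (unique) supercharacter of $U_{m}(q)$. Applied to $\psi$, this produces a supercharacter $\zet$ of $U_{m}(q)$ with $\langle \psi, \zet \rangle > 0$; because $\zet - \psi$ is a (possibly zero) character of $U_{m}(q)$, its restriction $\zet_{U} - \psi_{U}$ is a character of $U$, and hence $\psi_{U}$ is a sub-character of $\zet_{U}$. Combining this with the previous step gives $\langle \chi, \zet_{U} \rangle > 0$.

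Finally, \refc{c1} asserts that $\zet_{U}$ decomposes as a sum of supercharacters of $U$, so $\chi$ must appear as an irreducible constituent of at least one of the supercharacters $\xd$ occurring in this decomposition. This furnishes the required basic subset $D \sset \Phi$ and map $\pd$ with $\frob{\chi}{\xd} \neq 0$, completing the proof.

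There is no real obstacle to overcome at this stage, since the substantive work has been carried out in \refs{elem} and in the first part of the present section: \refps{p1}{p2} express restrictions of elementary characters of $U_{m}(q)$ in terms of (products of) elementary characters of $U$, \reft{t2} shows that products of supercharacters of $U$ are non-negative integer combinations of supercharacters, and \refc{c1} is the clean consequence that transfers irreducible constituents from supercharacters of $U_{m}(q)$ down to supercharacters of $U$. The theorem is thus obtained by stitching these facts together with Frobenius reciprocity and the classical supercharacter theory of $U_{m}(q)$.
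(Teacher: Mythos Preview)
Your proposal is correct and follows essentially the same argument as the paper: pick an irreducible $\psi$ of $U_{m}(q)$ whose restriction contains $\chi$, pass to the supercharacter $\zet$ of $U_{m}(q)$ containing $\psi$, and then apply \refc{c1} to $\zet_{U}$. The only difference is cosmetic---you spell out the Frobenius reciprocity step and the reason $\psi_{U}$ is a sub-character of $\zet_{U}$, whereas the paper simply asserts these.
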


\begin{proof}
Let $\psi$ be an irreducible character of $U_{m}(q)$ with $\frob{\chi}{\psi_{U}} \neq 0$. Let $\cd \sset \ce$ be the (unique) basic subset and $\map{\vphi}{\cd}{\fqx}$ the (unique) map such that $\psi$ is a constituent of the supercharacter $\zet = \zet_{\cd,\vphi}$ of $U_{m}(q)$. Then, $\psi_{U}$ is a constituent of $\zet_{U}$, hence $\chi$ is an irreducible constituent of $\zet_{U}$. The result follows by the previous corollary.
\end{proof}


\section{Orthogonality of supercharacters} \label{sec:ortho}

In this section, we prove the orthogonality of supercharacters, and thus complete the proof of \reft{t1}. The proof depends on the decomposition of $\fru^{\ast}$ as a disjoint union of its ``basic subvarieties'' as defined in the authors' paper \cite{AN1}. We start by recalling their definition.

We fix an arbitrary non-empty basic subset $D \sset \Phi$, and define the $D$-singular and $D$-regular entries as follows. For any $(i,j) \in \ce$, we set $$S(i,j) = \set{(i,k) \in \ce}{k \prec j} \cup \set{(k,j) \in \ce}{i \prec k},$$ and define, for any $\alp \in \Phi$, the subsets $$\ce_{S}(\alp) = \bcup_{(i,j) \in \ce(\alp)} S(i,j) \qquad \text{and} \qquad \ce_{R}(\alp) = \ce - \ce_{S}(\alp)$$ of $\ce$. We say that an entry $(k,l) \in \ce$ is {\it $\alp$-singular} if $(k,l) \in \ce_{S}(\alp)$; otherwise, we say that $(k,l)$ is an {\it $\alp$-regular}. More generally, given an arbitrary basic subset $\cd \sset\ce$, we define $$S(\cd) = \bcup_{(i,j) \in \cd} S(i,j) \qquad \text{and} \qquad R(\cd) = \ce - S(\cd).$$ The entries in $S(\cd)$ are said to be {\it $\cd$-singular}, and the entries in $R(\cd)$ are said to be {\it $\cd$-regular}. We observe that, for any $\alp \in \Phi$, an entry $(k,l) \in \ce$ is $\alp$-singular (resp., $\alp$-regular) if and only if it is $\ce(\alp)$-singular (resp., $\ce(\alp)$-regular). Now, since $D \sset \Phi$ is a basic subset, $\ce(D) = \bcup_{\alp \in D} \ce(\alp)$ is a basic subset of $\ce$ (by the definition). We say that an entry $(i,j) \in \ce$ is {\it $D$-singular} (resp., {\it $D$-regular}) if $(i,j)$ is $\ce(D)$-singular (resp., $\ce(D)$-regular). We denote by $\ce_{S}(D)$ the subset of $\ce$ consisting of all $D$-singular entries, and by $\ce_{R}(D)$ the subset of $\ce$ consisting of all $D$-regular entries. It is clear that $$\ce_{S}(D) = \bigcup_{\alpha \in D} \ce_{S}(\alpha) \qquad \text{and} \qquad \ce_{R}(D) = \ce - \ce_{S}(D).$$ (This definition can be extended to the empty (basic) subset of $\Phi$, in which case all entries  in $\ce$ are regular.)

For an arbitrary basic subset $\cd \sset \ce$ and an arbitrary entry $(i,j) \in \ce$, we denote by $\cd(i,j)$ the subset $$\cd(i,j) = \set{(k,l) \in \cd}{1 \preceq k \prec i,\ j \prec l \preceq -1};$$ it is clear that $\cd(i,j)$ is a basic subset of $\ce$. Let $\cd(i,j) = \{ (i_{1}, j_{1}), \ldots, (i_{t}, j_{t}) \}$ and suppose that $j_{1} \prec \cdots \prec j_{t}$. Moreover, let $\sig \in S_{t}$ be the (unique) permutation such that $i_{\sig(1)} \prec \cdots \prec i_{\sig(t)}$; as usual, we denote by $S_{t}$ the symmetric group of degree $t$. Then, for any $f \in \fru_{m}(q)^{\ast}$, we define $\Del_{i,j}^{\cd}(f) \in \fq$ to be the determinant
\begin{equation} \label{e4}
\Del_{i,j}^{\cd}(f) = \begin{vmatrix} f(e_{i_{\sig(1)}, j}) & f(e_{i_{\sig(1)}, j_{1}}) & \cdots & f(e_{i_{\sig(1)}, j_{t}}) \\ \vdots & \vdots & & \vdots \\ f(e_{i_{\sig(t)}, j}) & f(e_{i_{\sig(t)}, j_{1}}) & \cdots & f(e_{i_{\sig(t)}, j_{t}}) \\ f(e_{i, j}) & f(e_{i, j_{1}}) & \cdots & f(e_{i, j_{t}}) \end{vmatrix}.
\end{equation}
We note that, if $\cd(i,j)$ is empty, then $\Del_{i,j}^{\cd}(f) = f(e_{i,j})$; in particular, if $\cd$ is empty, then $\Del_{i,j}^{\cd}(f) = f(e_{i,j})$ for all $(i,j) \in \ce$.

Now, for any $f \in \fru^{\ast}$, we define the element $u(f) \in \fru$ by $$u(f) = \sum_{\alp \in \Phi} u(f)_{\alp} e_{\alp}$$ where, for each $\alp \in \Phi$, we set $$u(f)_{\alp} = \begin{cases} \frac{1}{2}\, f(e_{\alp}), & \text{if $\alp = \eps_{i}\pm \eps_{j}$ for $1 \leq i < j \leq n$,} \\ f(e_{\alp}), & \text{if $\fru \leq \frsp$ and $\alp = 2\eps_{i}$ for $1 \leq i \leq n$,} \\ \frac{1}{2}\, f(e_{\alp}), & \text{if $\fru \leq \froo$ and $\alp = \eps_{i}$ for $1 \leq i \leq n$.} \end{cases}$$ It is easy to see that $f(v) = \tr(u(f)^{T} v)$ for all $v \in \fru$, and that the mapping $f \mapsto u(f)$ defines a vector space isomorphism from $\fru^{\ast}$ to $\fru$. Finally, we define $\hf \in \fru_{m}(q)^{\ast}$ by $$\hf(v) = \tr(u(f)^{T} v)$$ for all $v \in \fru_{m}(q)$. Then, for any basic subset $D \sset \Phi$ and any entry $(i,j) \in \ce$, we set $$\Del_{i,j}^{D}(f) = \Delta_{i,j}^{\ce(D)}(\hf)$$ for all $f \in \fru$, and, for any map $\pd$, we define the {\it basic subvariety} $$\od = \set{f \in \fru^{\ast}}{\Del_{i,j}^{D}(f) = \Delta_{i,j}^{D}(f_{D,\phi}) \all (i,j) \in \ce_{R}(D)}$$ where
\begin{equation} \label{e5}
f_{D,\phi} = \sum_{\alpha \in D} \phi(\alpha) e^{\ast}_{\alpha} \in \fru^{\ast}.
\end{equation}

The following result is Theorem~4.5 of the paper \cite{AN1} (the proof given there is valid for an arbitrary odd prime).

\begin{theorem} \label{thm:t4}
For any basic subset $D \sset \Phi$ and any map $\pd$, the basic subvariety $\od \sset \fru^{\ast}$ is $U$-invariant (for the usual coadjoint action). Moreover, the vector space $\fru^{\ast}$ decomposes as the disjoint union $$\fru^{\ast} = \bcup_{D,\phi} \od$$ of all its basic subvarieties. 
\end{theorem}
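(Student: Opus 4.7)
The theorem asserts two things: (A) the $U$-invariance of each $\od$ under coadjoint action, and (B) the disjoint-union decomposition $\fru^{\ast} = \bcup_{D,\phi}\, \od$. My strategy for both parts is to pass through the unitriangular picture via the linear embedding $f \mapsto \hf$ of $\fru^{\ast}$ into $\fru_{m}(q)^{\ast}$.

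\emph{Partition (B).} Yan's thesis, as codified in \cite{A2, ADS}, provides a disjoint decomposition
\[
\fru_{m}(q)^{\ast} \;=\; \bcup_{\cd,\vphi}\, \Omega^{\ast}_{\cd,\vphi},
\]
where $\cd$ ranges over the basic subsets of $\ce$ and $\vphi$ over the maps $\cpd$, and where $\Omega^{\ast}_{\cd,\vphi}$ is cut out by precisely the determinants $\Del^{\cd}_{i,j}$ used in the present definition. Hence $\hf$ lies in a unique $\Omega^{\ast}_{\cd,\vphi}$, and the task reduces to showing that the pair $(\cd,\vphi)$ so produced necessarily has the form $(\ce(D), \vphi_{D,\phi})$ for a unique basic subset $D \sset \Phi$ and a unique map $\pd$. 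Here the symplectic/orthogonal structure of $u(f) \in \fru$ plays the decisive role: the defining symmetry of $\fru$ forces the lowest nonzero entries of $u(f)$ to occur in mirror-symmetric pairs, which translates into the invariance of $\cd$ under the involution $(i,j) \leftrightarrow (-j,-i)$ and matches up the values of $\vphi$ across each pair.

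\emph{Invariance (A).} Once the partition is established, it suffices to show that each polynomial $f \mapsto \Del^{D}_{i,j}(f)$, for $(i,j) \in \ce_{R}(D)$, is invariant under the coadjoint action of $U$. Since $U$ is generated by the root subgroups $U_{\gam}$, $\gam \in \Phi$, I would check invariance one root at a time. For $g = \exp(t e_{\gam})$ with $t \in \fq$, unwinding $(g \cdot f)(v) = f(g^{-1} v g)$ shows that the corresponding modification of $u(f)$ is, up to terms supported on strictly $D$-singular entries, an elementary row- or column-operation adding a scalar multiple of a strictly earlier row or column. The classical minor-invariance lemma, which already underlies the analogous $U_{m}(q)$-statement in \cite{A2}, then guarantees that the $\Del^{D}_{i,j}$ at $D$-regular positions are unchanged.

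\emph{Main obstacle.} The principal difficulty is in (B): verifying that the basic subset $\cd$ attached to $\hf$ by Yan's greedy procedure automatically respects the involution $(i,j) \leftrightarrow (-j,-i)$, so that $\cd = \ce(D)$ for some $D \sset \Phi$. A secondary technicality, specific to the $\sp$ case, is handling the fixed points $(i,-i)$ of the involution (the roots $2\eps_{i}$): here one must track the asymmetric normalization $u(f)_{2\eps_{i}} = f(e_{2\eps_{i}})$ (with no factor of $\tfrac{1}{2}$) in order to correctly translate $\vphi(i,-i)$ into $\phi(2\eps_{i})$.
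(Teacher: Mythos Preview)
The paper does not prove this theorem: immediately after the statement it says only that ``The following result is Theorem~4.5 of the paper \cite{AN1} (the proof given there is valid for an arbitrary odd prime).'' There is therefore no argument in the present paper against which to compare yours.

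Your route is nonetheless the one this paper's machinery naturally suggests, and indeed the key identity $O^{\ast}_{D,\phi} = \{f \in \fru^{\ast} : \hf \in O^{\ast}_{\ce(D),\vphi_{D}}\}$ on which your plan for (B) rests is recorded just after \eqref{e7}. You have correctly isolated the substantive step for (B): showing that the basic subset $\cd \sset \ce$ attached to $\hf$ by the unitriangular algorithm is automatically stable under the involution $(i,j) \leftrightarrow (-j,-i)$, so that $\cd = \ce(D)$ for some $D \sset \Phi$. This follows from the entrywise symmetry $u(f)_{i,j} = \pm u(f)_{-j,-i}$ built into $\fru$, together with the bookkeeping at the fixed points $(i,-i)$ in type $C_{n}$ that you already flag.

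For (A) your sketch is less solid. One might hope to deduce invariance of $O^{\ast}_{D,\phi}$ directly from the $U_{m}(q)$-invariance of $O^{\ast}_{\ce(D),\vphi_{D}}$ via equivariance of $f \mapsto \hf$, but this map is \emph{not} $U$-equivariant for the one-sided coadjoint actions: tracing through the definitions, the coadjoint action on $\fru^{\ast}$ transports via $f \mapsto u(f)$ to $u \mapsto g^{-T} u\, g^{T}$ (followed by the projection back to $\fru$), and for $g \in U$ upper-unitriangular this conjugation by lower-unitriangular matrices does not preserve $\fru_{m}(q)$. Consequently the ``classical minor-invariance lemma'' from the $U_{m}(q)$ setting does not apply verbatim, and your description of the effect on $u(f)$ as an elementary row- or column-operation is not literally correct without further argument. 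The honest content of (A) is precisely this translation---computing $(g \cdot f)(e_{\bet}) = f(g^{-1} e_{\bet} g)$ for each root element $g$ and each $\bet \in \Phi$, and then checking that the resulting change in the entries $u(f)_{k,l}$ leaves the relevant minors $\Del^{D}_{i,j}$ unchanged at $D$-regular positions. This is feasible root-by-root, but your sketch elides it.
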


As a particular case, let $\alp \in \Phi$ and $r \in \fqx$. Then, for $D = \{\alp\}$ and $\pd$ defined by $\phi(\alp) = r$, we obtain the {\it elementary subvariety} $$O^{\ast}_{\alp,r} = \od.$$ By \cite[Theorem~5.5]{AN1}, we have
\begin{equation} \label{e6}
\od = \sum_{\alp \in D} O^{\ast}_{\alp,\phi(\alp)}
\end{equation}
for any basic subset $D \sset \Phi$ and any map $\pd$. The elementary subvarieties of $\fru^{\ast}$ determine the elementary supercharacters of $U$ (and vice-versa) as shown by the formula of \refc{c1} below. The following observation will be useful for the proof.

Let $D$ be a basic subset of $\Phi$, and let $\pd$ be a map. Let $f_{D,\phi} \in \fru^{\ast}$ be defined as in \eqref{e5}, and write $\hf_{D,\phi} = \widehat{f_{D,\phi}} \in \fru_{m}(q)^{\ast}$. By the definition, for any $f \in \fru^{\ast}$, $$f \in \od \iff \Del_{i,j}^{\ce(D)}(\hf) = \Delta_{i,j}^{\ce(D)}(\hf_{D,\phi}) \all (i,j) \in \ce_{R}(D).$$ Henceforth, we denote by $\vphi_{D}$ the map $\map{\vphi_{D}}{\ce(D)}{\fqx}$ defined by $\vphi_{D}(i,j) = \hf_{D,\phi}(e_{i,j})$ for all $(i,j) \in \ce(D)$. Let $$O^{\ast}_{\ce(D),\vphi_{D}} = U_{m}(q) \hf_{D,\phi} U_{m}(q) \sset \fru_{m}(q)^{\ast}.$$ By \cite[Propositions~1~and~2]{A2} (see also the discussion in \cite[Appendix~2]{DI}), we know that
\begin{equation} \label{e7}
O^{\ast}_{\ce(D),\vphi_{D}} = \set{f \in \fru_{m}(q)^{\ast}}{\Del_{i,j}^{\cd}(f) = \Delta_{i,j}^{\cd}(\hf_{D,\phi}) \all (i,j) \in \ce_{R}(D)},
\end{equation}
and thus $$O^{\ast}_{D,\phi} = \set{f \in \fru^{\ast}}{\hf \in O^{\ast}_{\ce(D),\vphi_{D}}}.$$

In particular, let $\alp \in \Phi$, $(i,j) \in \ce^{+}(\alp)$ and $r \in \fqx$ be arbitrary. Then, for $D = \{\alp\}$ and $\pd$ defined by $\phi(\alp) = r$, we have $f_{D,\phi} = re^{\ast}_{\alp}$ and $\hf_{D,\phi} = (r/2)(e^{\ast}_{i,j} \pm e^{\ast}_{-j,-i})$ (where the sign is well-determined by $\fru$). By \cite[Proposition~1~and~2]{A2}, we know that $O^{\ast}_{\ce(\alp),\vphi_{\alp}} = O^{\ast}_{i,j,r/2} + O^{\ast}_{-j,-i,\pm r/2}$ where $\vphi_{\alp} = \vphi_{D} = \vphi_{\{\alp\}}$, and thus
\begin{equation} \label{e8}
O^{\ast}_{\alp,r} = \set{f \in \fru^{\ast}}{\hf \in O^{\ast}_{i,j,r/2} + O^{\ast}_{-j,-i,\pm r/2}}.
\end{equation}

We are now able to proceed with the proof of the following result.

\begin{proposition} \label{prop:p5}
Let $\alp \in \Phi$ and let $r \in \fqx$. For any $z \in U$, we denote by $a_{z}$ the element of $\fru$ given by \refl{l3}. Then, $$\xa(z) = \frac{\xa(1)}{|O^{\ast}_{\alp,r}|} \sum_{f \in O^{\ast}_{\alp,r}} \tet_{f}(a_{z})$$ for all $z \in U$.
\end{proposition}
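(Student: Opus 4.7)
The plan is to reduce to the Kirillov-type character formula for supercharacters of the ambient unitriangular group $U_{m}(q)$, which asserts that for any basic subset $\cd \sset \ce$ and any map $\cpd$,
$$\xcd(1+c) = \frac{1}{\sqrt{|\cod|}} \sum_{g \in \cod} \tet_{g}(c), \qquad c \in \fru_{m}(q),$$
as used in the proof of \refl{l4}. I split into cases according to whether \refp{p1} applies. In the generic case, when $\alp \neq 2\eps_{i}$ or $U \not\leq \sp$, \refp{p1} gives $\xa = (\zij)_{U}$ for $(i,j) \in \ce^{+}(\alp)$; \refl{l3} says $z$ and $1+a_{z}$ lie in the same superclass of $U_{m}(q)$, so $\zij(z) = \zij(1+a_{z})$ and
$$\xa(z) = \frac{1}{\sqrt{|O^{\ast}_{i,j,r}|}} \sum_{g \in O^{\ast}_{i,j,r}} \tet_{g}(a_{z}).$$

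Since $a_{z} \in \fru$, the summand $\tet_{g}(a_{z})$ depends only on the restriction $g|_{\fru} \in \fru^{\ast}$, and the crux is the claim that the restriction map $\map{\pi}{\fru_{m}(q)^{\ast}}{\fru^{\ast}}$, $g \mapsto g|_{\fru}$, sends $O^{\ast}_{i,j,r}$ surjectively onto $O^{\ast}_{\alp,r}$ with uniform fibres of common size $N = |O^{\ast}_{i,j,r}|/|O^{\ast}_{\alp,r}|$. For the inclusion $\pi(O^{\ast}_{i,j,r}) \sset O^{\ast}_{\alp,r}$ I would invoke the characterization $f \in O^{\ast}_{\alp,r} \iff \hf \in O^{\ast}_{\ce(\alp),\vphi_{\alp}}$ of equation~\eqref{e7} and compare the minors $\Del^{\ce(\alp)}_{k,l}(\widehat{g|_{\fru}})$ against the orbit invariants $\Del^{\{(i,j)\}}_{k,l}(g)$, which are constant on $O^{\ast}_{i,j,r}$ by \cite[Propositions~1~and~2]{A2}, keeping careful track of the factor~$\tfrac{1}{2}$ built into the definition of $u(f)$. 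Uniformity of the fibres follows from the $U$-equivariance of $\pi$ together with the $(U_{m}(q) \x U_{m}(q))$-homogeneity of $O^{\ast}_{i,j,r}$. Granted the claim, using $\xa(1) = \zij(1) = \sqrt{|O^{\ast}_{i,j,r}|}$,
$$\xa(z) = \frac{N}{\sqrt{|O^{\ast}_{i,j,r}|}} \sum_{f \in O^{\ast}_{\alp,r}} \tet_{f}(a_{z}) = \frac{\xa(1)}{|O^{\ast}_{\alp,r}|} \sum_{f \in O^{\ast}_{\alp,r}} \tet_{f}(a_{z}),$$
as required.

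For the exceptional case $U \leq \sp$ and $\alp = 2\eps_{i}$, \refp{p1} is unavailable (\refl{l4} shows $\xa$ is only a proper constituent of $(\zet_{i,-i,r})_{U}$). Here I would apply the same Kirillov-formula plus \refl{l3} recipe to $\zet_{i,-i,r}$ and then use \refp{p2},
$$(\zet_{i,-i,r})_{U} = \xa + 2 \sum_{i<j\leq n}\; \sum_{s \in -r\inv(\fqx)^{2}} \xa\,\xi_{2\eps_{j},s},$$
to isolate the target character. The analogue of the key claim says that $\pi(O^{\ast}_{i,-i,r})$ decomposes (disjointly, by \reft{t4}) as the union of $O^{\ast}_{\alp,r}$ together with the basic subvarieties $O^{\ast}_{\{2\eps_{i},2\eps_{j}\},\phi}$ labelling the secondary summands $\xa\,\xi_{2\eps_{j},s}$, with the fibre multiplicity on each piece matching the coefficient in \refp{p2} (the factor $2$ corresponds to the two square roots of $-r\inv s$ appearing in the computation of \refl{l5}); matching each basic subvariety separately then yields the stated formula for $\xa$. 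The main obstacle throughout is the fibre analysis of $\pi$, which is a careful bookkeeping of how the orbit-defining minors of $O^{\ast}_{i,j,r}$ (respectively $O^{\ast}_{i,-i,r}$) in $\fru_{m}(q)^{\ast}$ restrict to the basic-subvariety minors of $\fru^{\ast}$, reconciled with the factor~$\tfrac{1}{2}$ in $u(f)$ and the symmetry defining $\fru \sset \fru_{m}(q)$.
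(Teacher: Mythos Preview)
In the generic case your plan is essentially the paper's: pull back via \refp{p1}, apply \refl{l3} and the Kirillov formula for $\zij$, and analyse the restriction map $\pi$. The one soft spot is your justification of uniform fibres: ``$U$-equivariance of $\pi$ together with the $(U_{m}(q)\x U_{m}(q))$-homogeneity of $O^{\ast}_{i,j,r}$'' does not suffice, since the $U_{m}(q)\x U_{m}(q)$-action does not descend along $\pi$, and $O^{\ast}_{\alp,r}$ need not be a single $U$-orbit (it fails precisely when $U$ is orthogonal and $\alp=\eps_{i}+\eps_{j}$). The paper sidesteps this entirely: it observes that $f\mapsto f|_{\fru}$ is \emph{injective} on $O^{\ast}_{i,j,r}$ and that a direct count gives $|O^{\ast}_{i,j,r}|=|O^{\ast}_{\alp,r}|$, whence $\pi$ restricts to a bijection and your $N$ is simply~$1$.

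In the exceptional case $U\leq\sp$, $\alp=2\eps_{i}$, your route via \refp{p2} is circular. To ``match each basic subvariety separately'' you must know in advance that each summand $\xa\,\xi_{2\eps_{j},s}$ on the character side expands, as a function of $a_{z}$, only in the $\tet_{f}$ with $f$ lying in the corresponding subvariety $O^{\ast}_{\{2\eps_{i},2\eps_{j}\},\phi}$. But that is exactly the supercharacter formula (\reft{t6}) for that pair, whose proof rests on the present proposition; and an induction on $j$ does not help, since the product $\xa\,\xi_{2\eps_{j},s}$ still contains the unknown factor $\xa$. The paper instead abandons \refp{p2} here and computes $(\la)^{U}(z)$ directly from the definition of induction, using the key fact (from \cite[Proposition~3.1 and Theorem~5.5]{AN1}) that for $\alp=2\eps_{i}$ the subvariety $O^{\ast}_{\alp,r}$ is a \emph{single} coadjoint $U$-orbit; the standard Fourier-inversion argument on $\fru$ (together with $re^{\ast}_{\alp}+(\fru_{\alp})^{\perp}\sset O^{\ast}_{\alp,r}$) then produces the orbit sum directly.
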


\begin{proof}
Let $(i,j) \in \ce^{+}(\alp)$. Firstly, we consider the case where  $j \neq -i$. By \refp{p2}, we have $\xa = (\zij)_{U}$. Let $O^{\ast}_{i,j,r} = U_{m}(q) (re^{\ast}_{i,j}) U_{m}(q) \sset \fru_{m}(q)^{\ast}$. Then, by \refl{l3} and by \cite[Corollary~5.11]{DI}, we deduce that $$\xa(z) = \zij(z) = \zij(1+a_{z}) = \frac{1}{\sqrt{|O^{\ast}_{i,j,r}|}} \sum_{f \in O^{\ast}_{i,j,r}} \tet_{f}(u_{z})$$ for all $z \in U$.

Let $f \in O^{\ast}_{i,j,r}$ be arbitrary, and consider the restriction $f_{\fru}$ of $f$ to $\fru$. Define $u(f_{\fru}) \in \fru$ and $\hf = \widehat{f_{\fru}} \in \fru_{m}(q)^{\ast}$ as above; hence, $\hf(v) = \tr(u(f_{\fru})^{T} v)$ for all $v \in \fru_{m}(q)$. Let $\map{\vphi}{\ce(\alp)}{\fqx}$ be the map defined by $\vphi(i,j) = u(f)_{i,j} = u(f)_{\alp} = r/2$ and $\vphi(-j,-i) = u(f)_{-j,-i} = \pm r/2$. Using \eqref{e7}, it is straightforward to check that $\hf \in O^{\ast}_{i,j,r/2} + O^{\ast}_{-j,-i,\pm r/2}$, hence $f_{\fru} \in O^{\ast}_{\alp,r}$ (by \eqref{e8}). Since the mapping $f \mapsto f_{\fru}$ clearly defines an injective map from $O^{\ast}_{i,j,r}$ to $\fru$ and since $|O^{\ast}_{\alp,r}| = |O^{\ast}_{i,j,r}|$ (by direct computation), we conclude that $$O^{\ast}_{\alp,r} = \set{f_{\fru}}{f \in O^{\ast}_{i,j,r}}.$$ The result follows because $\xa(1) = \sqrt{|\oa|}$.

On the other hand, suppose that $U \leq \sp$ and $\alp = 2\eps_{i}$ for some $1 \leq i \leq n$. In this case, by \cite[Proposition~3.1 and Theorem~5.5]{AN1}, $\oa \sset \fru^{\ast}$ is the coadjoint $U$-orbit which contains $re^{\ast}_{\alp}$. Let $z \in U$ be fixed. By the definition of induced character, we have $$\xa(z) = (\la)^{U}(z) = \frac{1}{|U_{\alp}|} \sum_{\stackrel{\scriptstyle{x \in U}}{xzx\inv \in U_{\alp}}} \la(xzx\inv).$$ Since $\la(xzx\inv) = \la(xa_{z}x\inv)$ for all $x \in U$ with $xzx\inv \in U_{\alp}$, we deduce that
\begin{align*}
\xa(z) &= \frac{1}{|U_{\alp}|} \sum_{\stackrel{\scriptstyle{x \in U}}{xzx\inv \in U_{\alp}}} \tet_{r e^{\ast}_{\alp}}(xa_{z}x\inv) \\ &= \frac{1}{|U_{\alp}|} \sum_{x \in U} \tet_{re^{\ast}_{\alp}}(xa_{z}x\inv) \lpar \frac{1}{|\fru : \fru_{\alp}|} \sum_{g \in (\fru_{\alp})\ort} \tet_{g}(xa_{z}x\inv) \rpar \\ &= \frac{1}{|U|} \sum_{g \in (\fru_{\alp})\ort} \sum_{x \in U} \tet_{re^{\ast}_{\alp}+g}(xa_{z}x\inv) = \frac{1}{|U|} \sum_{h \in re^{\ast}_{\alp} + (\fru_{\alp})\ort} \sum_{x \in U} \tet_{x \cdot h}(a_{z}).
\end{align*}
Now, it is straightforward to check that $re^{\ast}_{\alp} + (\fru_{\alp})\ort \sset \oa$, and so $$\sum_{x \in U} \tet_{x \cdot h}(a_{z}) = |C_{U}(re^{\ast}_{\alp})| \sum_{f \in \oa} \tet_{f}(a_{z})$$ for all $h \in re^{\ast}_{\alp} + (\fru_{\alp})\ort$; as usual, $C_{U}(re^{\ast}_{\alp})$ denotes the centralizer of $re^{\ast}_{\alp}$ under the coadjoint action of $U$. It follows that $$\xa(z) = \frac{|C_{U}(re^{\ast}_{\alp})|\, |U : U_{\alp}|}{|U|} \sum_{f \in \oa} \tet_{f}(a_{z}) = \frac{|U : U_{\alp}|}{|\oa|} \sum_{f \in \oa} \tet_{f}(a_{z}).$$ The result follows because $\xa(1) = |U:U_{\alp}|$ (by the definition). 
\end{proof}

\begin{remark}
We observe that, by \cite[Proposition~2.1]{AN1}, a basic subvariety $\oa$, for $\alp \in \Phi$ and $r \in \fqx$, is a coadjoint $U$-orbit in all cases, except when $U$ is orthogonal and $\alp = \eps_{i}+\eps_{j}$ for some $1 \leq i, j \leq n$. In this case, by \cite[Theorem~5.5]{AN1}, we have $$\oa = \bcup_{s \in \fq} O_{s}$$ where $\bet = \eps_{i}-\eps_{j}$ and $O_{s} = O(re^{\ast}_{\alp} + se^{\ast}_{\bet})$ denotes the coadjoint $U$-orbit which contains $re^{\ast}_{\alp} + se^{\ast}_{\bet} \in \fru^{\ast}$. On the other hand, an argument similar to the above shows that, for any $s \in \fq$, the expression $$\chi_{s}(z) = \frac{\chi_{s}(1)}{|O_{s}|} \sum_{f \in O_{s}} \tet_{f}(a_{z}), \qquad \text{for all $z \in U$,}$$ defines an irreducible character of $U$. In fact, we may consider the subgroup $$V = \set{z \in U}{a_{z} \in \fru_{\alp} + \fq e_{\bet}}$$ of $U$, and define the linear character $\map{\mu_{s}}{V}{\Cx}$ by $\mu_{s}(z) = \tet(rz_{i,-j}+sz_{i,j})$ for all $z \in V$. Then, we may show that $\chi_{s} = (\mu_{s})^{U}$ for all $s \in \fq$, and that $$\xa = \sum_{s \in \fq} \chi_{s}$$ is the decomposition of $\xa$ into $q$ distinct irreducible constituents (cf. \cite[Proposition~2.1]{AN1}).
\end{remark}

Next, we prove the orthogonality of supercharacters (and thus we conclude the proof of \reft{t1}).

\begin{theorem} \label{thm:t5}
Let $D$ and $D'$ be basic subsets of $\Phi$ and let $\pd$ and $\pdd$ be maps. Then, $\frob{\xd}{\xdd} \neq 0$ if and only if $(D,\phi) = (D',\phi')$.
\end{theorem}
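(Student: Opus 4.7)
The plan is to combine \refp{p5} (the Fourier expansion of each elementary character) with equation~(\ref{e6}) (the Minkowski-sum decomposition $\od = \sum_{\alp \in D} O^{\ast}_{\alp,\phi(\alp)}$) and \reft{t4} (the partition of $\fru^{\ast}$ into basic subvarieties). The bridge is \refl{l3}: the bijection $z \mapsto a_{z}$ from $U$ to $\fru$ converts the Frobenius inner product on class functions of $U$ into an additive orthogonality computation on the Lie algebra, so that the orthogonality claim reduces to a disjointness statement for basic subvarieties.

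First, since $\xd = \prod_{\alp \in D} \xi_{\alp,\phi(\alp)}$ and $\xdd = \prod_{\alp' \in D'} \xi_{\alp',\phi'(\alp')}$, I expand every factor in $\xd(z) \overline{\xdd(z)}$ via \refp{p5}. Because $\tet$ is an additive homomorphism, the product of the individual exponentials collapses into a single $\tet_{F-G}(a_{z})$, and I obtain
$$\xd(z) \overline{\xdd(z)} = \frac{\xd(1)\, \xdd(1)}{P \cdot P'} \sum_{(f_{\alp}),\,(g_{\alp'})} \tet_{F - G}(a_{z}),$$
where $P = \prod_{\alp \in D} |O^{\ast}_{\alp,\phi(\alp)}|$ and $P' = \prod_{\alp' \in D'} |O^{\ast}_{\alp',\phi'(\alp')}|$, the tuples $(f_{\alp})$ and $(g_{\alp'})$ range respectively over $\prod_{\alp \in D} O^{\ast}_{\alp,\phi(\alp)}$ and $\prod_{\alp' \in D'} O^{\ast}_{\alp',\phi'(\alp')}$, and $F = \sum_{\alp} f_{\alp}$, $G = \sum_{\alp'} g_{\alp'}$.

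Next, I average over $z \in U$. By \refl{l3}, the sum $\sum_{z \in U} \tet_{F-G}(a_{z})$ rewrites as $\sum_{a \in \fru} \tet_{F-G}(a)$, which equals $|\fru|$ when $F = G$ in $\fru^{\ast}$ and vanishes otherwise by additive-character orthogonality. Since $|U| = |\fru|$, I arrive at
$$\frob{\xd}{\xdd}_{U} = \frac{\xd(1)\, \xdd(1)}{P \cdot P'}\, N, \qquad N = |\{((f_{\alp}),(g_{\alp'})) : F = G\}|.$$
By~(\ref{e6}), any such $F$ lies in $\od$ and any such $G$ lies in $\odd$, so a solution forces $F \in \od \cap \odd$. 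When $(D,\phi) \neq (D',\phi')$, \reft{t4} makes this intersection empty; hence $N = 0$ and $\frob{\xd}{\xdd} = 0$. The converse direction is automatic: when $(D,\phi) = (D',\phi')$ the class function $\xd$ is a nonzero character (indeed $\xd(1) > 0$), so $\frob{\xd}{\xd} > 0$.

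The representation-theoretic content is carried almost entirely by \reft{t4}, imported from \cite{AN1}; what remains here is a clean Fourier-inversion argument on $\fru$. The most delicate step is guaranteeing that the sums $F$ and $G$ really do land in $\od$ and $\odd$ respectively, so that the disjointness of basic subvarieties forces $N = 0$; this is furnished by the easy direction of~(\ref{e6}). A secondary point requiring attention is the uniform applicability of \refp{p5} across all root types, in particular the anomalous symplectic case $\alp = 2\eps_{i}$, which is handled explicitly in its proof.
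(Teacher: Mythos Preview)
Your argument is correct and follows essentially the same route as the paper's own proof: expand each elementary factor via \refp{p5}, use the bijection of \refl{l3} to convert the Frobenius product into an orthogonality relation for additive characters on $\fru$, and then invoke \eqref{e6} together with \reft{t4} to conclude. Your bookkeeping of the tuple count $N$ is in fact slightly tidier than the paper's displayed computation, which silently drops the multiplicities $m_{f}$, $m'_{g}$ when passing to $|\od \cap \odd|$; of course this is harmless for the vanishing statement since all multiplicities are positive.
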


\begin{proof}
Let $z \in U$ be arbitrary, we let $a_{z} \in \fru$ be the element given by \refl{l3}. By the definition, we have $\xd = \prod_{\alp \in D} \xi_{\alp,\phi(\alp)}$, and so (by the previous proposition) $$\xd(z) = \frac{\xd(1)}{|\tod|} \sum_{(\seq{f}{d}) \in \tod} \tet_{f_{1}}(a_{z}) \cdots \tet_{f_{d}}(a_{z}) = \frac{\xd(1)}{|\tod|} \sum_{f \in \od} m_{f} \tet_{f}(a_{z})$$ where $d = |D|$, $\tod = \prod_{\alp \in D} O^{\ast}_{\alp,\phi(\alp)}$, and $$m_{f} = |\set{(\seq{f}{d}) \in \tod}{f_{1} + \cdots + f_{d} = f}|$$ for all $f \in \od$; we recall that $\od = \sum_{\alp \in D} O^{\ast}_{\alp,\phi(\alp)}$ (see \eqref{e6}). Similarly,$$\xdd(z) = \frac{\xdd(1)}{|\todd|} \sum_{g \in \odd} m'_{g} \tet_{g}(a_{z})$$ where $\todd = \prod_{\bet \in D'} O^{\ast}_{\bet,\phi(\bet)}$, and $$m'_{g} = |\set{(\seq{g}{d'}) \in \todd}{g_{1} + \cdots + g_{d'} = g}|$$ for all $g \in \odd$; here, $d' = |D'|$. Since the mapping $z \mapsto a_{z}$ defines a bijection $U \to \fru$, we deduce that
\begin{align*}
\frob{\xd}{\xdd} &= \frac{\xd(1) \xdd(1)}{|\tod|\, |\todd|} \sum_{f \in \od} \sum_{g \in \odd} \lpar \frac{1}{|U|} \sum_{z \in U} \tet_{f}(a_{z}) \ovl{\tet_{g}(a_{z})} \rpar \\ &= \frac{\xd(1) \xdd(1)}{|\tod|\, |\todd|} \sum_{f \in \od} \sum_{g \in \odd} \frob{\tet_{f}}{\tet_{g}}_{\fru} \\ &= \frac{\xd(1) \xdd(1)}{|\tod|\, |\todd|}\; |\od \cap \odd|, 
\end{align*}
and the result follows by \cite[Theorem~4.5]{AN1}.
\end{proof}


\section{A supercharacter formula} \label{sec:formula}

In this section, we establish a formula for the values of a supercharacter $\xi_{D,\phi}$ as a sum over the the basic subvariety $\od \sset \fru^{\ast}$ (which extends \refp{p5}). In fact, we have the following result. 

\begin{theorem} \label{thm:t6}
Let $D$ be a basic subset of $\Phi$, and let $\pd$ be a map. For any $z \in U$, we denote by $a_{z}$ the element of $\fru$ given by \refl{l3}. Then, $$\xd(z) = \frac{\xd(1)}{|\od|} \sum_{f \in \od} \tet_{f}(a_{z})$$ for all $z \in U$.
\end{theorem}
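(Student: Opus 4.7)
The plan is to apply \refp{p5} to each factor of the product $\xd = \prod_{\alp \in D}\xi_{\alp,\phi(\alp)}$ and identify the resulting sum with the one appearing in the statement. Using the additivity $\tet_f(a)\tet_g(a) = \tet_{f+g}(a)$,
$$\xd(z) = \prod_{\alp \in D} \frac{\xi_{\alp,\phi(\alp)}(1)}{|O^{\ast}_{\alp,\phi(\alp)}|}\sum_{f_{\alp} \in O^{\ast}_{\alp,\phi(\alp)}}\tet_{f_{\alp}}(a_z) = \frac{\xd(1)}{|\tod|}\sum_{(f_{\alp}) \in \tod}\tet_{\sum_{\alp} f_{\alp}}(a_z),$$
with $\tod = \prod_{\alp \in D} O^{\ast}_{\alp,\phi(\alp)}$ in the notation of the proof of \reft{t5}. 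Grouping the tuples according to their sum $f$, which ranges over $\od = \sum_{\alp \in D} O^{\ast}_{\alp,\phi(\alp)}$ by \eqref{e6}, this rewrites as
$$\xd(z) = \frac{\xd(1)}{|\tod|}\sum_{f \in \od} m_{f}\,\tet_f(a_z),$$
where $m_f$ is the size of the fiber of the addition map $\tod \to \od$ over $f$. The statement of \reft{t6} is thus equivalent to the assertion that $m_f$ is constant on $\od$ with common value $|\tod|/|\od|$.

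To establish this constancy I would argue in two steps. First, $m_f$ is $U$-invariant: since each $O^{\ast}_{\alp,\phi(\alp)}$ is $U$-invariant by \reft{t4}, the coadjoint action of $U$ lifts diagonally to $\tod$ equivariantly with the addition map, and so $m_{u \cdot f} = m_f$ for every $u \in U$. Since $\od$ need not be a single orbit, to upgrade $U$-invariance to constancy on all of $\od$ I would evaluate $\frob{\xd}{\xd}_{U}$ in two independent ways. From the identity above,
$$\frob{\xd}{\xd}_{U} = \frac{\xd(1)^{2}}{|\tod|^{2}}\sum_{f \in \od} m_f^{2}.$$
On the other hand, using $\xd = (\ld)^{U}$ from \eqref{e2}, Frobenius reciprocity combined with Mackey's subgroup theorem applied to $(U_{D},U_{D})$-double cosets in $U$ (keeping track of when $\ld$ agrees with its conjugate on the intersection of conjugate subgroups) should yield $\frob{\xd}{\xd}_{U} = \xd(1)^{2}/|\od|$. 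Together with the tautology $\sum_{f \in \od} m_f = |\tod|$, the Cauchy--Schwarz inequality $(\sum m_f)^{2} \le |\od|\sum m_f^{2}$ is then forced to be an equality, which gives constancy of $m_f$ and completes the proof.

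The main obstacle is the independent evaluation of $\frob{\xd}{\xd}_{U}$: it amounts to showing $|\od| = |\tod| = \prod_{\alp}|O^{\ast}_{\alp,\phi(\alp)}|$, equivalently that the addition map $\tod \to \od$ is in fact a bijection ($m_{f} \equiv 1$). Geometrically this expresses the fact that, because $D$ is a basic subset of $\Phi$, the orbits $O^{\ast}_{\alp,\phi(\alp)}$ for distinct $\alp \in D$ occupy independent ``directions'' of $\fru^{\ast}$, so no nontrivial cancellation between orbits indexed by different roots is possible. I would derive this from the determinantal description of $\od$ displayed just before \reft{t4}: the minors $\Del_{i,j}^{D}(f)$ for $(i,j) \in \ce_{R}(D)$ should determine each summand $f_{\alp}$ uniquely from $f \in \od$. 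Once the bijectivity is in hand, the formula of \reft{t6} is immediate from the first displayed identity.
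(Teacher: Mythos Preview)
Your reduction to the expression $\xd(z) = \frac{\xd(1)}{|\tod|}\sum_{f\in\od} m_f\,\tet_f(a_z)$ and the identification of constancy of $m_f$ as the crux are exactly how the paper opens the argument. The divergence is in how constancy is obtained, and there your proposal has a real gap.

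Your first route (Cauchy--Schwarz) rests on the identity $\frob{\xd}{\xd} = \xd(1)^2/|\od|$, but in the paper this is \refc{c2}, which is \emph{deduced from} \reft{t6}; invoking it here is circular, and an independent Mackey/Frobenius computation over $(U_D,U_D)$-double cosets is not carried out and would be substantial. Your second route asserts bijectivity of the addition map ($m_f\equiv 1$), which is stronger than needed. The determinantal description of $\od$ does not hand this to you directly: for distinct $\alp,\bet\in D$ the singular entries of $O^{\ast}_{\alp,\phi(\alp)}$ and $O^{\ast}_{\bet,\phi(\bet)}$ can overlap, so recovering each summand from $\sum_\alp f_\alp$ needs an argument you have not given (the set identity \eqref{e6} from \cite{AN1} is only quoted as a set equality). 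There is also a small conflation: showing $\frob{\xd}{\xd}=\xd(1)^2/|\od|$ is \emph{not} the same as $|\od|=|\tod|$; via Cauchy--Schwarz the former gives constancy of $m_f$ for whatever common value $|\tod|/|\od|$ happens to be, not necessarily $1$.

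The paper bypasses both routes by importing the already-established formula for the unitriangular group. It considers the supercharacter $\xcd$ of $U_m(q)$ attached to $\cd=\ce^{+}(D)$ and the map $\vphi$ induced by $\phi$, for which \cite[Theorem~5.6]{DI} gives $\xcd(1+a)=\frac{\xcd(1)}{|\cod|}\sum_{g\in\cod}\tet_g(a)$ with \emph{constant} coefficients. Via \refl{l3} and the (injective on $\cod$) projection $\pi\colon\fru_m(q)^{\ast}\to\fru^{\ast}$ this becomes a sum over $\pi(\cod)=\od\cup\cv$. On the other hand, by \refc{c1} and \reft{t5} the restriction $(\xcd)_U$ decomposes as $c\,\xd+\eta$ with $\eta$ a combination of supercharacters orthogonal to $\xd$, hence expressible as $\sum_{f\in\cv'} n_f\tet_f(a_z)$ with $\cv'\cap\od=\emptyset$. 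Equating the two expressions for $(\xcd)_U$ and using the linear independence of the characters $\tet_f$ of $\fru^{+}$ forces every $m_f$, $f\in\od$, to equal the single constant $\frac{\xcd(1)\,|\tod|}{c\,\xd(1)\,|\cod|}$; taking degrees identifies this constant as $|\tod|/|\od|$. This route delivers constancy with no Mackey analysis and no direct study of the addition map, at the cost of one appeal to the $U_m(q)$ theory.
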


\begin{proof}
Let $z \in U$ be arbitrary. As in the previous proof, we have $$\xd(z) = \frac{\xd(1)}{|\tod|} \sum_{f \in \od} m_{f} \tet_{f}(a_{z})$$ where $\tod = \prod_{\alp \in D} O^{\ast}_{\alp,\phi(\alp)}$, and $$m_{f} = |\set{(f_{\alp})_{\alp \in D} \in \tod}{\sum_{\alp \in D} f_{\alp} = f}|$$ for all $f \in \od$.

Now, let $\cd = \ce^{+}(D)$, let $\map{\vphi}{\cd}{\fqx}$ be the map defined by $\vphi(i,j) = \phi(\alp)$ for all $(i,j) \in \cd$ with $(i,j) \in \ce(\alp)$, and consider the supercharacter $\xcd$ of $U_{m}(q)$. By \refl{l3} and \cite[Theorem~5.6]{DI}, we have $$\xcd(z) = \xcd(1+a_{z}) = \frac{\xcd(1)}{|\od|} \sum_{f \in \cod} \tet_{f}(a_{z}).$$ Let $\map{\pi}{\fru_{m}(q)^{\ast}}{\fru^{\ast}}$ be the natural projection (given by restriction of functions). Since $\pi$ clearly defines an injective map $\map{\pi}{\cod}{\fru^{\ast}}$, we obtain $$\xcd(z) = \frac{\xcd(1)}{|\cod|} \sum_{f \in \pi(\cod)} \tet_{f}(a_{z}).$$ It is straightforward to check that $\od \sset \pi_{\fru}(\cod)$; in fact, we have $\pi(O^{\ast}_{i,j,r}) \sset \oa$ for all $\alp \in \Phi$, all $(i,j) \in \ce(\alp)$ and all $r \in \fqx$ (the equality holds whenever $j \neq -i$; see the proof of \refp{p5}), and the claim follows because $\od = \sum_{\alp \in D} O^{\ast}_{\alp,\phi(\alp)}$ (by \cite[Theorem~5.5]{AN1}). Since $\pi(\cod)$ and $\od$ are $U$-invariant, we conclude that $\pi_{\fru}(\cod)$ decomposes as the disjoint union $$\pi_{\fru}(\cod) = \od \cup \cv$$ for some $U$-invariant subset of $\cv \sset \fru^{\ast}$. Therefore, we get $$\xcd(z) = \frac{\xcd(1)}{|\cod|} \lpar \sum_{f \in \od} \tet_{f}(a_{z}) + \sum_{f \in \cv} \tet_{f}(a_{z}) \rpar.$$

On the other hand, by \refp{p3} and \reft{t1}, we know that $$(\xcd)_{U} = m_{D,\phi} \xd + \eta$$ where $\eta$ is a linear combination (with nonnegative integer coefficients) of supercharacters satisfying $\frob{\xd}{\eta} = 0$. Arguing as in the proof of \reft{t4}, we obtain $$\eta(z) = \sum_{f \in \cv'} n_{f} \tet_{f}(a_{z})$$ for some subset $\cv' \sset \fru^{\ast}$ and some positive integers $n_{f}$ for $f \in \cv'$. Therefore, we get $$\xcd(z) = m_{D,\phi} \xd(z) + \eta(z) = \frac{m_{D,\phi} \xd(1)}{|\tod|} \sum_{f \in \od} m_{f} \tet_{f}(a_{z}) + \sum_{f \in \cv'} n_{f} \tet_{f}(a_{z}).$$ Since $z \in U$ is arbitrary and the mapping $z \mapsto a_{z}$ defines a bijection, we conclude that $$\frac{\xcd(1)}{|\cod|} \lpar \sum_{f \in \od} \tet_{f}(a) + \sum_{f \in \cv} \tet_{f}(a) \rpar = \frac{m_{D,\phi} \xd(1)}{|\tod|} \sum_{f \in \od} m_{f} \tet_{f}(a) + \sum_{f \in \cv'} n_{f} \tet_{f}(a)$$ for all $a \in \fru$, hence $$\frac{\xcd(1)}{|\cod|} \lpar \sum_{f \in \od} \tet_{f} + \sum_{f \in \cv} \tet_{f} \rpar = \frac{m_{D,\phi} \xd(1)}{|\tod|} \sum_{f \in \od} m_{f} \tet_{f} + \sum_{f \in \cv'} n_{f} \tet_{f}$$ (as linear characters of the abelian group $\fru^{+}$). Since $\od \cap \cv = \od \cap \cv' = \emptyset$, we deduce that $$\frac{\xcd(1)}{|\cod|} = \frac{m_{D,\phi} \xd(1) m_{f}}{|\tod|}$$ for all $f \in \od$. Therefore, the coefficients $m_{f}$ do not depend on $f \in \od$, and thus $$\xd(z) = \frac{\xd(1) m}{|\tod|} \sum_{f \in \od} \tet_{f}(a_{z})$$ for a well-determined positive integer $m$. Taking degrees, we obtain $m = |\tod| / |\od|$, and so $$\xd(z) = \frac{\xd(1)}{|\od|} \sum_{f \in \od} \tet_{f}(a_{z})$$ as required.
\end{proof}

As an immediate consequence, we obtain the following result.

\begin{corollary} \label{cor:c2}
Let $D \sset \Phi$ be a basic subset, and let $\pd$ be a map. Then, $\frob{\xd}{\xd} = \xd(1)^{2}/|\od|$; hence, $|\od| = \xd(1)^{2} / \frob{\xd}{\xd}$.
\end{corollary}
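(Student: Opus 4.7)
The proof should be a direct computation using the supercharacter formula from \reft{t6}, together with the orthogonality of the linear characters $\tet_{f}$ of the additive group $\fru^{+}$.

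First, I would apply Frobenius' definition of the scalar product and substitute the formula from \reft{t6}:
\begin{equation*}
\frob{\xd}{\xd} = \frac{1}{|U|} \sum_{z \in U} \xd(z) \ovl{\xd(z)} = \frac{\xd(1)^{2}}{|\od|^{2}} \cdot \frac{1}{|U|} \sum_{z \in U} \sum_{f, g \in \od} \tet_{f}(a_{z}) \ovl{\tet_{g}(a_{z})}.
\end{equation*}
Next, I would invoke the fact that, by \refl{l3}, the map $z \mapsto a_{z}$ is a bijection from $U$ onto $\fru$ (and $|U| = |\fru|$), so the sum over $z \in U$ can be replaced by a sum over $a \in \fru$. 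This yields
\begin{equation*}
\frob{\xd}{\xd} = \frac{\xd(1)^{2}}{|\od|^{2}} \sum_{f, g \in \od} \frob{\tet_{f}}{\tet_{g}}_{\fru}.
\end{equation*}

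Finally, I would use the standard fact (recalled in the paragraph preceding \refl{l4}) that $\set{\tet_{f}}{f \in \fru^{\ast}}$ is precisely the set of all irreducible characters of the abelian group $\fru^{+}$, so by orthogonality of irreducible characters of $\fru^{+}$ we have $\frob{\tet_{f}}{\tet_{g}}_{\fru} = \del_{f,g}$. Hence only the diagonal terms $f = g$ survive, giving $|\od|$ terms each equal to $1$, and we obtain $\frob{\xd}{\xd} = \xd(1)^{2} / |\od|$, from which the equivalent formula $|\od| = \xd(1)^{2} / \frob{\xd}{\xd}$ follows at once.

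There is no real obstacle here; the whole content of the corollary is already packaged in \reft{t6}, and the argument is a three-line calculation. The only minor care needed is to note that the bijection $z \mapsto a_{z}$ matches the sum over $U$ with the sum over $\fru$ (so that Frobenius orthogonality on $\fru^{+}$ can be applied), and that the sum defining $\frob{\xd}{\xd}$ indeed becomes a double sum of pairings $\frob{\tet_{f}}{\tet_{g}}_{\fru}$ after pulling the prefactor $\xd(1)/|\od|$ outside the sum twice (once for $\xd$ and once for $\ovl{\xd}$).
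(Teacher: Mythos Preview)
Your proof is correct and follows exactly the same route as the paper's own proof: substitute the supercharacter formula of \reft{t6}, use the bijection $z \mapsto a_{z}$ to rewrite the sum over $U$ as a sum over $\fru$, and apply orthogonality of the characters $\tet_{f}$ of $\fru^{+}$. The paper simply compresses these three steps into a single displayed equation.
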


\begin{proof}
Using the formula of the previous theorem, we evaluate $$\frob{\xd}{\xd} = \frac{1}{|U|} \sum_{z \in U} \xd(z) \ovl{\xd(z)} = \frac{\xd(1)^{2}}{|\od|^{2}} \sum_{f,g \in \od} \frob{\tet_{f}}{\tet_{g}}_{\fru} = \frac{\xd(1)^{2}}{|\od|},$$ as required.
\end{proof}

Finally, we obtain the following decomposition of the regular character of $U$.

\begin{theorem} \label{thm:t7}
Let $\rho$ be the regular character of $U$. Then, $$\rho = \sum_{D,\phi} \frac{\xd(1)}{\frob{\xd}{\xd}}\; \xd$$ where the sum is over all basic subsets $D \sset \Phi$ and all maps $\pd$.
\end{theorem}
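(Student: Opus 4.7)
The plan is to evaluate the right-hand side at an arbitrary element $z \in U$ and recognize it as the regular character, using the supercharacter formula from \reft{t6}, the orbit-size identity from \refc{c2}, and the partition of $\fru^{\ast}$ from \reft{t4}.

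First I would substitute. By \refc{c2}, $\xd(1)/\frob{\xd}{\xd} = |\od|/\xd(1)$, so for any $z \in U$,
\begin{equation*}
\sum_{D,\phi} \frac{\xd(1)}{\frob{\xd}{\xd}}\, \xd(z) = \sum_{D,\phi} \frac{|\od|}{\xd(1)}\, \xd(z).
\end{equation*}
Next, \reft{t6} gives $\xd(z) = \frac{\xd(1)}{|\od|} \sum_{f \in \od} \tet_{f}(a_{z})$, so the factors of $\xd(1)$ and $|\od|$ cancel and the sum telescopes to
\begin{equation*}
\sum_{D,\phi}\, \sum_{f \in \od} \tet_{f}(a_{z}).
\end{equation*}

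By \reft{t4}, the basic subvarieties $\od$ partition $\fru^{\ast}$, so the double sum collapses to $\sum_{f \in \fru^{\ast}} \tet_{f}(a_{z})$. Since $\set{\tet_{f}}{f \in \fru^{\ast}}$ exhausts the irreducible characters of the abelian group $\fru^{+}$ (as noted just before \refl{l4}), the standard column-orthogonality of the character table of $\fru^{+}$ gives $\sum_{f \in \fru^{\ast}} \tet_{f}(a_{z}) = |\fru|$ if $a_{z} = 0$ and $0$ otherwise. By the explicit description of $a_{z}$ in \refl{l3}, we have $a_{z} = 0$ precisely when $z = 1$, and the bijection $z \mapsto a_{z}$ yields $|\fru| = |U|$. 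Therefore the right-hand side equals $|U|$ at $z = 1$ and vanishes elsewhere, which is exactly $\rho(z)$.

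The argument is essentially a bookkeeping exercise once the preceding theorems are in hand; there is no genuine obstacle. The only place that requires care is verifying that $a_{z} = 0$ is equivalent to $z = 1$ (so that the additive orthogonality relation detects the identity of $U$), but this is immediate from the block form of $a_{z}$ in \refl{l3}: the blocks $u$, $v$, $w$ all vanish iff $x = I_{n}$ and $v = w = 0$, iff $z$ is the identity matrix.
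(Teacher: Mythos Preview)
Your proof is correct and follows essentially the same route as the paper's own proof: substitute the supercharacter formula of \reft{t6}, use \refc{c2} to reduce the coefficient to $1$, collapse the double sum via the partition of $\fru^{\ast}$ from \reft{t4}, and finish with the orthogonality relation $\sum_{f \in \fru^{\ast}} \tet_{f}(a_{z}) = \del_{a_{z},0}\,|\fru|$ together with the bijection $z \mapsto a_{z}$ from \refl{l3}. Your write-up is in fact a bit more careful than the paper's in justifying the step $a_{z} = 0 \iff z = 1$.
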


\begin{proof}
Let $z \in U$ be arbitrary. Then,
\begin{align*}
\sum_{D,\phi} \frac{\xd(1)}{\frob{\xd}{\xd}}\; \xd(z) &= \sum_{D,\phi} \frac{\xd(1)}{\frob{\xd}{\xd}} \lpar \frac{\xd(1)}{|\od|} \sum_{f \in \od} \tet_{f}(a_{z}) \rpar \\ &= \frac{\xd(1)^{2}}{\frob{\xd}{\xd}\, |\od|} \sum_{D,\phi}  \sum_{f \in \od} \tet_{f}(a_{z}) \\ &= \sum_{D,\phi}  \sum_{f \in \od} \tet_{f}(a_{z}).
\end{align*}
Since $\fru^{\ast}$ is the disjoint union $\fru^{\ast} = \bcup_{D,\phi} \od$, we obtain $$\sum_{D,\phi}  \sum_{f \in \od} \tet_{f}(a_{z}) = \sum_{f \in \fru^{\ast}} \tet_{f}(a_{z}) = \del_{a_{z},0} |\fru| = \del_{z,1} |U|,$$ and the result follows.
\end{proof}


\section{Irreducible characters of maximum degree} \label{sec:max}

As a final remark, we observe that the description of the irreducible characters of maximum degree of $U$ as given in \cite[Section~6]{AN1} remains valid for arbitrary odd primes. The proofs given there can be adapted (and simplified) using the results of the present paper and also some properties of the Kirillov functions associated with coadjoint $U$-orbits. Given an arbitrary $U$-orbit $O \sset \fru^{\ast}$, we define the {\it Kirillov function} $\map{\phi_{O}}{U}{\C}$ by the rule $$\phi_{O}(z) = \frac{1}{\sqrt{|O|}} \sum_{f \in O} \tet_{f}(a_{z})$$ for all $z \in U$ (see \cite[Section~5]{DI} for the similar definition in the case of finite algebra groups). In fact, it can be shown that every irreducible character of maximum degree is precisely the Kirillov function associated with a (unique) coadjoint $U$-orbit of maximum cardinality. We should mention that similar results have been obtained recently by J. Sangroniz in the paper \cite{S2}, where the author uses Kirillov's method of coadjoint orbits and shows that, for sufficiently large orbits, the associated Kirillov functions are in fact irreducible characters. In this section, we shall use Sangroniz's results to resume the description given in our previous paper \cite{AN1}.

We start by considering the symplectic case $U \leq \sp$. Let $$\Gam = \set{2\eps_{i}}{1 \leq i \leq n} \cup \set{\eps_{i}+\eps_{i+1}}{1 \leq i < n} \sset \Phi.$$ Then, for any basic subset $D \sset \Gam$ and any map $\pd$, the supercharacter $\xd$ is irreducible (by \refc{c2}). In particular, in the case where, either $D$, or $D \cup \{2\eps_{n}\}$, is a maximal basic subset of $\Gam$, then $\xd$ is irreducible and has maximum degree $q^{n(n-1)/2}$ (see the proof of \cite[Proposition~6.3]{AN1}). On the other hand, it is easy to see that the number $d_{n}$ of all these pairs $(D,\phi)$ can be computed by the ``Fibonacci'' recurrence relation $$\bca d_{1} = q,\ d_{2} = q^{2}-1, & \\ d_{n} = (q-1)(d_{n-1}+d_{n-2}), & \text{for $n \geq 3$.} \eca$$ Therefore, by \cite[Theorem~12]{S2}, we obtain the following result.

\begin{theorem} \label{thm:t8}
Let $\chi$ be an irreducible character of $U \leq \sp$. Then, $\chi$ has maximum degree if and only if $\chi = \xd$ where, either $D$, or $D \cup \{2\eps_{n}\}$, is a maximal basic subset of $\Gam$ and $\pd$ is any map.
\end{theorem}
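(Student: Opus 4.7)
The plan is to prove both directions of the equivalence, combining the structural results of the present paper with Sangroniz's recent result \cite[Theorem~12]{S2} and the analysis of \cite[Section~6]{AN1}. For the ``if'' direction I would establish irreducibility and degree directly; for the ``only if'' direction I would use a counting argument against the number of coadjoint orbits of maximum cardinality.

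For the forward direction, fix a basic subset $D \sset \Gam$ such that either $D$ or $D \cup \{2\eps_{n}\}$ is maximal in $\Gam$. Since we are in the symplectic case, the exceptional situation in the remark following \reft{t5} does not occur, so each elementary subvariety $O^{\ast}_{\alp,\phi(\alp)}$ with $\alp \in \Gam$ is a single coadjoint $U$-orbit; exploiting \cite[Theorem~5.5]{AN1} and the specific shape of $\Gam$ (only long roots and adjacent short-sum roots), a direct case analysis shows the sum $\od = \sum_{\alp \in D} O^{\ast}_{\alp,\phi(\alp)}$ is itself a single coadjoint $U$-orbit for such $D$. Then \refc{c2} forces $|\od| = \xd(1)^{2}$, so $\xd$ is irreducible. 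A direct computation of $\xd(1) = [U:U_{D}]$ yields the common value $q^{n(n-1)/2}$; the equivalence of ``$D$ maximal'' versus ``$D \cup \{2\eps_{n}\}$ maximal'' arises from $U_{2\eps_{n}} = U$, so that $\xi_{2\eps_{n},r}$ is a linear character and adjoining or removing $2\eps_{n}$ leaves $\xd(1)$ unchanged.

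The number $d_{n}$ of admissible pairs $(D,\phi)$ then satisfies the stated Fibonacci-type recurrence by conditioning on the ``last'' root used: an admissible $D$ either employs $2\eps_{n}$ (reducing to an admissible choice for $n-1$) or $\eps_{n-1}+\eps_{n}$ (reducing to $n-2$), with each choice carrying a factor $q-1$ coming from the value of $\phi$ on that root; the base cases $d_{1}=q$ and $d_{2}=q^{2}-1$ are checked by inspection.

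For the converse, I invoke \cite[Theorem~12]{S2}: the irreducible characters of $U$ of maximum degree are precisely the Kirillov functions attached to coadjoint $U$-orbits of maximum cardinality. By \reft{t6}, each admissible $\xd$ is the Kirillov function of its orbit $\od$, so the construction already produces $d_{n}$ coadjoint orbits of cardinality $q^{n(n-1)}$. To establish that no further orbits of this size exist, use \reft{t4}: any orbit lies in a unique basic subvariety $\od$, and if $(D,\phi)$ is not admissible then either $\xd$ is reducible or $\xd(1) < q^{n(n-1)/2}$, which via \refc{c2} forces $|\od| < q^{n(n-1)}$ and precludes a maximum-size orbit inside $\od$. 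The main obstacle is precisely this last step, namely verifying uniformly that $\xd(1) \leq q^{n(n-1)/2}$ for every basic $D \sset \Phi$ and every map $\pd$, with equality characterizing exactly the admissible pairs. This is the substantive combinatorial content of the theorem: it requires a systematic analysis of the indices $[U:U_{D}]$ as $D$ ranges over basic subsets of $\Phi$, separately handling the cases $D \not\sset \Gam$ and $D \sset \Gam$ non-maximal, and identifying why only $\Gam$-based admissible configurations attain the extremal value.
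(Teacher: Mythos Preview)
Your forward direction and the Fibonacci count match the paper's argument closely. The divergence is in the converse, and there you are making the argument harder than it needs to be.

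You read \cite[Theorem~12]{S2} as a qualitative statement (``maximum-degree irreducibles are exactly the Kirillov functions of maximum-size orbits'') and then set yourself the task of independently ruling out any other orbit of size $q^{n(n-1)}$ by bounding $\xd(1)$ over all basic $D \sset \Phi$. That is the step you flag as the ``main obstacle,'' and it is genuinely nontrivial as you describe it. The paper avoids this entirely: \cite[Theorem~12]{S2} is invoked for the \emph{count} of irreducible characters of maximum degree, and that count equals $d_{n}$. Since the forward direction already exhibits $d_{n}$ pairwise distinct irreducible characters of degree $q^{n(n-1)/2}$ (distinctness following from \reft{t1}), a pure counting comparison finishes the converse with no further analysis of $\xd(1)$ for general $D$.

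So your proposal is not wrong in spirit, but the last paragraph outlines work that is both unnecessary and, as written, incomplete. Replace it with: the $d_{n}$ supercharacters constructed are distinct irreducibles of degree $q^{n(n-1)/2}$; by \cite[Theorem~12]{S2} the total number of irreducible characters of $U$ of maximum degree is exactly $d_{n}$; hence the list is exhaustive.
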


Next, we consider the even orthogonal case $U \leq \eo$. Let $$\Gam = \set{\eps_{i} + \eps_{i+1}}{1 \leq i < n},$$ $D \sset \Gam$ be a basic subset, and $\pd$ be a map. Then, by \refc{c2}, we easily conclude that $\frob{\xd}{\xd} = q^{|D|}$, and a repetition of the proof of \cite[Proposition~6.5]{AN1} shows that $\xd$ is multiplicity free; hence, it has $q^{|D|}$ irreducible constituents each with degree equal to $q^{-|D|} \xd(1)$. In particular, for $$D = \{\eps_{1}+\eps_{2}, \eps_{3}+\eps_{4}, \ldots, \eps_{2r-1}+\eps_{2r}\}$$ where $r = \lfloor n \rfloor$, the supercharacter $\xd$ has $q^{r}$ (distinct) irreducible constituents, each with degree equal to $q^{f(n)}$ where $$f(n) = \bca n(n-2)/2, & \text{if $n$ is even,} \\ (n-1)^{2}/2, & \text{if $n$ is odd.} \eca$$ On the other hand, if $n = 2r$ is even and $$D = \{\eps_{1}+\eps_{2}, \eps_{3}+\eps_{4}, \ldots, \eps_{2r-3}+\eps_{2r-2}\} \sset \Gam,$$ the supercharacter $\xi_{D,\phi}$ has $q^{r-1}$ (distinct) irreducible constituents, each with degree equal to $q^{n(n-2)/2}$. Therefore, for the basic subset $$D = \{\eps_{1}+\eps_{2}, \eps_{3}+\eps_{4}, \ldots, \eps_{2r-3}+\eps_{2r-2}\} \cup \{\eps_{2r-1}-\eps_{2r}\} \sset \Phi$$ and any map $\pd$, the supercharacter $\xd$ also has $q^{r-1}$ (distinct) irreducible constituents, each with degree equal to $q^{n(n-2)/2}$. Now, by \reft{t1}, we may repeat the proof of \cite[Proposition~6.6]{AN1} to conclude that $q^{f(n)}$ is the maximum degree of an irreducible character of $U$, and thus we have obtained $d_{n}$ irreducible characters of maximum degree where $$d_{n} = \bca q^{(n+2)/2} (q-1)^{(n-2)/2}, & \text{if $n$ is even,} \\ q^{(n-1)/2} (q-1)^{(n-1)/2}, & \text{if $n$ is odd.} \eca$$ Using \cite[Theorem~13]{S2}, we conclude the proof of the following result.

\begin{theorem} \label{thm:t9}
Suppose that $U \leq \eo$, and let $\chi$ be an irreducible character of $U$. Let $D \sset \Phi$ be the (unique) basic subset and $\pd$ the (unique) map such that $\frob{\chi}{\xd} \neq 0$. Then, the following holds.
\begin{enumerate}
\item If $n$ is even, then $\chi$ has maximum degree if and only if $$D = \{\eps_{1}+\eps_{2}, \eps_{3}+\eps_{4}, \ldots, \eps_{n-3}+\eps_{n-2}\} \cup D_{1}$$ where $D_{1} \subsetneq \{\eps_{n-1}-\eps_{n}, \eps_{n-1}+\eps_{n}\}$.
\item If $n$ is odd, then $\chi$ has maximum degree if and only if $$D = \{\eps_{1}+\eps_{2}, \eps_{3}+\eps_{4}, \ldots, \eps_{n-1}+\eps_{n}\}.$$
\end{enumerate}
\end{theorem}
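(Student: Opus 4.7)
The existence side of both (i) and (ii) is already established in the paragraphs immediately preceding the statement: the listed basic subsets $D$, paired with any admissible map $\pd$, give multiplicity-free supercharacters $\xd$ whose $q^{|D|}$ irreducible constituents all have degree $q^{f(n)}$, yielding the $d_{n}$ distinct irreducible characters claimed. What remains is the converse---that no other pair $(D,\phi)$ produces an irreducible constituent of degree at least $q^{f(n)}$, and that these characters exhaust the list.

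The plan is to deduce the converse from two inputs: the Kirillov-orbit description of maximum-degree characters in \cite[Theorem~13]{S2}, and the basic-subvariety machinery of \reft{t4} combined with \refc{c2}. By \cite[Theorem~13]{S2}, every irreducible character of $U$ of maximum degree is a Kirillov function $\phi_{O}$ attached to a coadjoint $U$-orbit $O \sset \fru^{\ast}$ of maximum cardinality, distinct orbits yield distinct such characters, and the common degree equals $\sqrt{|O|}$. In particular, the number of irreducible characters of maximum degree equals the number of coadjoint orbits of maximum cardinality in $\fru^{\ast}$.

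Next, by \reft{t4} and \refc{c2}, each basic subvariety $\od$ is a disjoint union of coadjoint orbits and satisfies $|\od| = \xd(1)^{2}/\frob{\xd}{\xd}$. For the specific $D$ listed in (i) and (ii) one has $\frob{\xd}{\xd} = q^{|D|}$ and $\xd$ is multiplicity-free, so $\od$ decomposes into $q^{|D|}$ coadjoint orbits of common cardinality $\xd(1)^{2}/q^{2|D|} = q^{2f(n)}$, whose associated Kirillov functions are exactly the $q^{|D|}$ irreducible constituents of $\xd$, each of degree $q^{f(n)}$. Summing over all admissible pairs identifies the $d_{n}$ already-constructed characters with the Kirillov functions of the coadjoint orbits of cardinality $q^{2f(n)}$ lying inside these specific $\od$.

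The remaining step, which I expect to be the \emph{main obstacle}, is to show that no coadjoint orbit has cardinality exceeding $q^{2f(n)}$. Following the strategy of \cite[Proposition~6.6]{AN1}, one bounds $\xd(1)^{2}/q^{|D|}$ combinatorially, for an arbitrary basic subset $D \sset \Phi$ and any $\pd$, in terms of the number of $D$-singular positive roots, and verifies that the maximum is attained exactly when $D$ takes the prescribed form---the three-fold flexibility $D_{1} \subsetneq \{\eps_{n-1}-\eps_{n}, \eps_{n-1}+\eps_{n}\}$ in the even case reflecting the allowable behaviour on the last two indices. Once this combinatorial bound is in place, the count of coadjoint orbits of cardinality $q^{2f(n)}$ matches $d_{n}$, and \cite[Theorem~13]{S2} forces the listed characters to exhaust all irreducible characters of maximum degree, completing the proof.
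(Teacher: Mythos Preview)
Your approach is essentially the one the paper takes: construct the $d_{n}$ characters explicitly, invoke the proof of \cite[Proposition~6.6]{AN1} (via \reft{t1}) to bound the maximum degree by $q^{f(n)}$, and then appeal to \cite[Theorem~13]{S2} to conclude that the list is complete. Your additional detail about identifying the constituents with Kirillov functions of the orbits inside $\od$ is correct and compatible with the paper's framework, though the paper does not spell it out.

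There is one small slip. You assert uniformly that $\frob{\xd}{\xd} = q^{|D|}$ for all the basic subsets listed in (i), but this fails when $D_{1} = \{\eps_{n-1}-\eps_{n}\}$: the elementary character $\xi_{\eps_{n-1}-\eps_{n},t}$ is linear (since $U_{\eps_{n-1}-\eps_{n}} = U$), hence irreducible, and contributes no factor of $q$ to the Frobenius norm. In that case $|D| = n/2$ but $\frob{\xd}{\xd} = q^{(n-2)/2}$, the supercharacter has $q^{(n-2)/2}$ constituents (not $q^{|D|}$), and your displayed formula $\xd(1)^{2}/q^{2|D|}$ gives $q^{2f(n)-2}$ rather than $q^{2f(n)}$. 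The remedy is simply to replace $q^{|D|}$ by $\frob{\xd}{\xd}$ throughout; the orbit cardinalities then come out to $q^{2f(n)}$ in every case and the count of $d_{n}$ is recovered correctly. With this correction the argument goes through as written.
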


Finally, we consider the odd orthogonal case $U \leq \oo$. Let $$\Gam = \set{\eps_{i} + \eps_{i+1}}{1 \leq i < n},$$ let $D \sset \Gam$ be a basic subset, and let $\pd$ be a map. Then, as in the even case, we conclude that, for $$D = \{\eps_{1}+\eps_{2}, \eps_{3}+\eps_{4}, \ldots, \eps_{2r-1}+\eps_{2r}\} \sset \Gam$$ where $r = \lfloor n \rfloor$, the supercharacter $\xd$ has $q^{r}$ (distinct) irreducible constituents, each with degree equal to $q^{n(n-1)/2}$. On the other hand, using \refc{c2} (see also \cite[pg.~423]{AN1}), we conclude that, for the basic subset $$D = \bca \{\eps_{1}+\eps_{2}, \eps_{3}+\eps_{4}, \ldots, \eps_{n-3}+\eps_{n-2}\} \cup \{\eps_{n-1}\}, & \text{if $n = 2r$ is even,} \\ \{\eps_{1}+\eps_{2}, \eps_{3}+\eps_{4}, \ldots, \eps_{n-2}+\eps_{n-1}\} \cup \{\eps_{n}\}, & \text{if $n = 2r+1$ is odd,} \eca$$ and any map $\pd$, the supercharacter $\xd$ has, either $q^{r -1}$, or $q^{r}$, (distinct) irreducible constituents, each with degree equal to $q^{n(n-1)/2}$. Finally, as in the even case, we conclude that $q^{n(n-1)/2}$ is the maximum degree of an irreducible character of $U$, and thus we have obtained $d_{n}$ irreducible characters of maximum degree where $$d_{n} = \bca q^{(n-2)/2} (q+1) (q-1)^{n/2}, & \text{if $n$ is even,} \\ q^{(n+1)/2} (q-1)^{(n-1)/2}, & \text{if $n$ is odd.} \eca$$ Using \cite[Theorem~13]{S2}, we conclude the proof of the following result.

\begin{theorem} \label{thm:t10}
Suppose that $U \leq \oo$, and let $\chi$ be an irreducible character of $U$. Let $D \sset \Phi$ be the (unique) basic subset and $\pd$ the (unique) map such that $\frob{\chi}{\xd} \neq 0$. Then, the following holds.
\begin{enumerate}
\item If $n$ is even, then $\chi$ has maximum degree if and only if $$D = \{\eps_{1}+\eps_{2}, \eps_{3}+\eps_{4}, \ldots, \eps_{n-3}+\eps_{n-2}\} \cup D_{1}$$ where, either $D_{1} = \{\eps_{n-1}+\eps_{n}\}$, or $D_{1} = \{\eps_{n}\}$.
\item If $n$ is odd, then $\chi$ has maximum degree if and only if $$D = \{\eps_{1}+\eps_{2}, \eps_{3}+\eps_{4}, \ldots, \eps_{n-2}+\eps_{n-1}\} \cup D_{1}$$ where $D_{1} \sset \{\eps_{n}\}$.
\end{enumerate}
\end{theorem}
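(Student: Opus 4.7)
The plan is to follow the blueprint of \reft{t8} and \reft{t9}, now adapted to the odd orthogonal setting $U \leq \oo$, where the root system $\Phi$ contains the short roots $\eps_i$ for $1 \leq i \leq n$. First I would exhibit a family of supercharacters $\xd$ whose irreducible constituents all have the common degree $q^{n(n-1)/2}$, and then match the cardinality of this family to the stated count $d_n$, invoking \cite[Theorem~13]{S2} to confirm that $q^{n(n-1)/2}$ is indeed the maximum irreducible degree.

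Let $\Gam = \set{\eps_i + \eps_{i+1}}{1 \leq i < n} \sset \Phi$. For any basic subset $D \sset \Gam \cup \{\eps_n\}$ and any map $\pd$, \refc{c2} gives $\frob{\xd}{\xd} = \xd(1)^2/|\od|$, and a direct computation of $|\od|$ using \eqref{e6} and the explicit form of the elementary subvarieties $\oa$ for $\alp \in \Gam \cup \{\eps_n\}$ shows that this inner product is a power of $q$. Repeating the argument of \cite[Proposition~6.5]{AN1}, suitably modified to include the root $\eps_n$, then shows that $\xd$ is multiplicity-free, so all its constituents share the common degree $|\od|/\xd(1)$; for the $D$'s listed in items (i) and (ii) a direct computation confirms this common degree to be $q^{n(n-1)/2}$.

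The next step is the count. For $n = 2r$ even, the sub-case $D_1 = \{\eps_{n-1}+\eps_n\}$ yields $|D| = r$, $(q-1)^r$ choices of $\pd$, and $q^r$ constituents per $\xd$, while the sub-case $D_1 = \{\eps_n\}$ yields $|D| = r$, $(q-1)^r$ choices of $\pd$, and $q^{r-1}$ constituents per $\xd$; summing gives $q^{r-1}(q-1)^r(q+1) = q^{(n-2)/2}(q+1)(q-1)^{n/2}$, matching the claimed $d_n$. For $n = 2r+1$ odd, the sub-cases $D_1 = \emptyset$ and $D_1 = \{\eps_n\}$ contribute $q^r(q-1)^r$ and $q^r(q-1)^{r+1}$ respectively, summing to $q^{(n+1)/2}(q-1)^{(n-1)/2}$.

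The main obstacle is the upper bound: ensuring that no irreducible character of $U$ has degree exceeding $q^{n(n-1)/2}$. Here the approach is to invoke \cite[Theorem~13]{S2}, which asserts that every irreducible character of maximum degree arises as the Kirillov function of a coadjoint $U$-orbit of maximum cardinality; combined with the disjoint decomposition $\fru^{\ast} = \bcup_{D',\phi'} \odd$ from \reft{t4} and a case-by-case bound on $|\odd|$ using \eqref{e6}, this reduces the problem to matching the resulting enumeration of maximal-cardinality orbits against the count $d_n$ already computed, which then forces the characterization in items (i) and (ii).
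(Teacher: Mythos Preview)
Your proposal follows essentially the same route as the paper: exhibit the listed supercharacters, use \refc{c2} together with the multiplicity-free argument of \cite[Proposition~6.5]{AN1} to identify their irreducible constituents and their common degree $q^{n(n-1)/2}$, carry out the count to reach $d_n$, and then invoke \cite[Theorem~13]{S2} to conclude that these exhaust the irreducible characters of maximum degree. Your enumeration in both parities agrees with the paper's.

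The one point where you diverge is the upper bound on the maximum degree. The paper does not deduce it from \cite[Theorem~13]{S2} together with a case-by-case bound on $|\odd|$ via \reft{t4} and \eqref{e6}; instead it argues ``as in the even case'', i.e., it repeats the proof of \cite[Proposition~6.6]{AN1} using \reft{t1} to bound the degree of any irreducible constituent of any $\xdd$ directly. Your route is workable in principle, but it is less direct: bounding $|\odd|$ over all pairs $(D',\phi')$ is a nontrivial combinatorial exercise, whereas the argument of \cite[Proposition~6.6]{AN1} already packages that estimate. Also, your description of \cite[Theorem~13]{S2} as asserting that every maximum-degree irreducible is a Kirillov function is not quite how the paper uses it; in the paper, Sangroniz's result supplies the independent count (or characterization) of maximum-degree irreducibles, which is then matched against the $d_n$ characters already constructed.
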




\begin{thebibliography}{10}

\bibitem{A0}
{\sc Andr{\'e}, C. A.~M.}
\newblock {\em Irreducible characters of the unitriangular group and coadjoint
  orbits}.
\newblock PhD thesis, University of Warwick, 1992.

\bibitem{A1}
{\sc Andr{\'e}, C. A.~M.}
\newblock Basic characters of the unitriangular group.
\newblock {\em J. Algebra 175}, 1 (1995), 287--319.

\bibitem{A2}
{\sc Andr{\'e}, C. A.~M.}
\newblock Basic sums of coadjoint orbits of the unitriangular group.
\newblock {\em J. Algebra 176}, 3 (1995), 959--1000.

\bibitem{A3}
{\sc Andr{\'e}, C. A.~M.}
\newblock The basic character table of the unitriangular group.
\newblock {\em J. Algebra 241}, 1 (2001), 437--471.

\bibitem{A4}
{\sc Andr{\'e}, C. A.~M.}
\newblock Basic characters of the unitriangular group (for arbitrary primes).
\newblock {\em Proc. Amer. Math. Soc. 130}, 7 (2002), 1943--1954.

\bibitem{A5}
{\sc Andr{\'e}, C. A.~M.}
\newblock Hecke algebras for the basic characters of the unitriangular group.
\newblock {\em Proc. Amer. Math. Soc. 132}, 4 (2004), 987--996.

\bibitem{AN1}
{\sc Andr{\'e}, C. A.~M., and Neto, A.~M.}
\newblock Super-characters of finite unipotent groups of types {$B\sb n$},
  {$C\sb n$} and {$D\sb n$}.
\newblock {\em J. Algebra 305}, 1 (2006), 394--429.

\bibitem{AN2}
{\sc Andr{\'e}, C. A.~M., and Neto, A.~M.}
\newblock A supercharacter theory for the {S}ylow {$p$}-subgroups of the finite
  symplectic and orthogonal groups.
\newblock {\em Preprint\/} (2008).

\bibitem{ANi}
{\sc Andr{\'e}, C. A.~M., and Nicol{\'a}s, A.~P.}
\newblock {Supercharacters of the adjoint group of a finite radical ring}.
\newblock {\em J. Group Th.\/} (to appear).

\bibitem{AO}
{\sc Andr{\'e}, C. A.~M., and Pinho, O.}
\newblock Factorization of supercharacters of finite algebra groups.
\newblock {\em Preprint\/} (2008).

\bibitem{ADS}
{\sc Arias-Castro, E., Diaconis, P., and Stanley, R.}
\newblock A super-class walk on upper-triangular matrices.
\newblock {\em J. Algebra 278}, 2 (2004), 739--765.

\bibitem{C1}
{\sc Carter, R.~W.}
\newblock {\em Simple groups of {L}ie type}.
\newblock Pure and Applied Mathematics. Wiley, London, 1972.

\bibitem{C2}
{\sc Carter, R.~W.}
\newblock {\em Finite Groups of {L}ie type (conjugacy classes and complex
  characters)}.
\newblock Pure and Applied Mathematics. Wiley, New York, 1985.

\bibitem{CR}
{\sc Curtis, C.~W., and Reiner, I.}
\newblock {\em Methods of representation theory (with applications to finite
  groups and orders). {V}ol. {II}}.
\newblock Pure and Applied Mathematics. Wiley, New York, 1987.

\bibitem{DI}
{\sc Diaconis, P., and Isaacs, I.~M.}
\newblock Supercharacters and superclasses for algebra groups.
\newblock {\em Trans. Amer. Math. Soc. 360}, 5 (2008), 2359--2392.

\bibitem{Gal}
{\sc Gudikov, P.~M., Kapitonova, Y.~V., Polyak, S.~S., Rudko, V.~P., and
  Tsitkin, A.~I.}
\newblock Classes of conjugate elements of a unitriangular group.
\newblock {\em Cybernetics 26\/} (1990), 47--57.

\bibitem{H}
{\sc Huppert, B.}
\newblock {\em Character theory of finite groups}, vol.~25 of {\em de Gruyter
  Expositions in Mathematics}.
\newblock Walter de Gruyter, Berlin, 1998.

\bibitem{J}
{\sc Jacobson, N.}
\newblock {\em Lie algebras}.
\newblock Dover, New York, 1979.

\bibitem{K}
{\sc Kazdhan, D.}
\newblock Proof of {S}pringer's hypothesis.
\newblock {\em Israel J. Math. 28\/} (1977), 272--286.

\bibitem{Ki}
{\sc Kirillov, A.~A.}
\newblock Variations on the triangular theme.
\newblock In {\em Lie groups and {L}ie algebras: {E}. {B}. {D}ynkin's seminar},
  vol.~169 of {\em Amer. Math. Soc. Transl. Ser. 2}. Amer. Math. Soc.,
  Providence RI, 1995, pp.~43--73.

\bibitem{T1}
{\sc Marberg, E., and Thiem, N.}
\newblock Superinduction for pattern groups.
\newblock {\em Preprint\/} (2008).

\bibitem{S1}
{\sc Sangroniz, J.}
\newblock Characters of algebra groups and unitriangular groups.
\newblock In {\em Finite groups 2003}. Walter de Gruyter, Berlin, 2004,
  pp.~335--349.

\bibitem{S2}
{\sc Sangroniz, J.}
\newblock Irreducible characters of large degree of {S}ylow $p$-subgroups of
  classical groups.
\newblock {\em Preprint\/} (2008).

\bibitem{T2}
{\sc Thiem, N., and Venkateswaran, V.}
\newblock Restricting supercharacters of the finite group of unipotent
  uppertriangular matrices.
\newblock {\em Preprint\/} (2008).

\bibitem{Y}
{\sc Yan, N.}
\newblock {\em Representation theory of the finite unipotent linear groups}.
\newblock PhD thesis, University of Pennsylvannia, 2001.

\end{thebibliography}
\end{document}